\def \E{\mathbb{E}}
\def \R{\mathbb{R}}
\def\leftB{[\![}
\def\rightB{]\!]}
\newcommand \A[1]{{\bf (#1)}}
\def\B{{\cal B}}
\def\B{{\cal B}}
\def\P{{\mathbb{P}}  }
\def\bint#1^#2{\displaystyle{\int_{#1}^{#2}}}
\def\bsum#1^#2{\displaystyle{\sum_{#1}^{#2}}}
\def\xdt_#1{X_#1(\Delta t)}
\newtheorem{THM}{Theorem}[section]
\newtheorem{PROP}{Proposition}[section]
\newtheorem{REM}{Remark}[section]
\def \R{\mathbb{R}}
\def \N{\mathbb{N}}
\def \E{\mathbb{E}}
\newtheorem{thm}{Theorem}[section]
\newtheorem{lem}[thm]{Lemma}
\begin{document}
\title{On the weak approximation of a skew diffusion by an Euler-type scheme}
\author{N. Frikha}\address{LPMA, Universit\'e Paris Diderot, 5 rue Thomas Mann 75013 Paris, email: frikha@math.univ-paris-diderot.fr}
\date{\today}
\begin{abstract} We study the weak approximation error of a skew diffusion with bounded measurable drift and H\"older diffusion coefficient by an Euler-type scheme, which consists of iteratively simulating skew Brownian motions with constant drift. We first establish two sided Gaussian bounds for the density of this approximation scheme. Then, a bound for the difference between the densities of the skew diffusion and its Euler approximation is obtained. Notably, the weak approximation error is shown to be of order $h^{\eta/2}$, where $h$ is the time step of the scheme, $\eta$ being the H\"older exponent of the diffusion coefficient. 
\end{abstract}
\subjclass{60H35,65C30,65C05}
\keywords{Euler scheme, weak error, skew diffusion, Gaussian bounds}
\maketitle

\section{Introduction}
\subsection{Statement of the Problem}
\label{stat:problem:sec}
We consider the unique weak solution of the following $\R$-valued stochastic differential equation (SDE) $(X_t)_{t\geq0}$ with dynamics
\begin{equation}\label{skew:diff}
X_t = x + \int_0^t b(X_s) ds + \int_0^t \sigma(X_s) dW_s + (2\alpha-1) L^{0}_t(X)
\end{equation}

\noindent where $W=(W_t)_{t\geq0}$ is a one dimensional Brownian motion defined on a filtered probability space $(\Omega, \mathcal{F}, (\mathcal{F}_t)_{t\geq0},\mathbb{P})$ satisfying the usual assumptions and $L^{0}(X)$ is the symmetric local time of $X$ at the origin.

When $b=0$ and $\sigma=1$, the solution to \eqref{skew:diff} is called the skew Brownian motion. Harrison and Shepp \cite{Har:Shepp} proved that if $|2\alpha-1|\leq1$ then there is a unique strong solution and if $|2\alpha-1|>1$, there is no solution. The case $\alpha=1$ corresponds to reflecting Brownian motion.

Here we will assume that $\alpha \in (0,1)$, $b$ is measurable, bounded, $\sigma$ is uniformly elliptic, bounded and $a=\sigma^2$ is  $\eta$-H\"older continuous for some $\eta \in (0,1]$. The previous assumptions guarantee the existence of a unique weak solution to \eqref{skew:diff}. Moreover, for any $(t,x)\in \R^{*}_+ \times \R$, $X_t$ admits a density $y\mapsto p(0,t,x,y)$, which is continuous on $\R^{*}$ and satisfies a Gaussian upper-bound. We refer to \cite{kohatsu:taguchi:zhong} for more details, see also \cite{Legall}. We also refer the interested reader to the recent survey \cite{Lejay} and the references therein for various applications of such equation.

As far as numerical approximation is concerned several discretization schemes of \eqref{skew:diff} have been proposed. For instance, Lejay and Martinez \cite{lejay:mart} recently introduced a numerical scheme based on the simulation of a skew Brownian motion. Martinez and Talay \cite{Mart:Talay} proposed a transformed Euler discretization scheme of an equation similar to \eqref{skew:diff} to approximate the solution of a linear parabolic diffraction problem and provide a weak convergence rate. Another approximation scheme based on random walk has also been studied by \'{E}tor \cite{Etore}. In \cite{etore:martinez}, \'{E}tor and Martinez developed a simulation scheme for skew diffusions with constant diffusion coefficient.
 
 To approximate equation \eqref{skew:diff} on the time interval $[0,T]$, $T>0$, we introduce the Euler scheme $(X^{N}_t)_{t\in [0,T]}$ associated to the time step $h=T/N$, $N\in \N^*$ and time grid $t_i = i h $, $i\in \leftB0,N\rightB$, defined by $X^{N}_0 = x$ and for all $t_i \leq t \leq t_{i+1}$ 
 \begin{equation}\label{Euler:scheme}
 X^{N}_t = X^{N}_{t_i} +  b(X^{N}_{t_i}) (t-t_i) + \sigma(X^{N}_{t_i}) (W_t-W_{t_i}) + (2\alpha-1) L^{0}_{t-t_i}(X^{N}(t_i,X^{N}_{t_i})).
 \end{equation}
 
 Let us note that the above scheme does not correspond to a standard Euler-Maruyama approximation scheme since we do not discretize the part corresponding to the local time in \eqref{stat:problem:sec}. However, its computation only requires to be able to simulate exactly the skew Brownian motion with a constant drift at time $t-t_i$. This process is known to be exactly simulatable and we refer to \cite{etore:martinez} for the exact expression of its density.  
 
Two kind of quantities are of interest when studying the weak approximation error of \eqref{skew:diff}. The first one concerns the quantity $\mathcal{E}^{1}_{\mathcal{W}}:= \E_x[f(X_T)]-\E_x[f(X^{N}_T)]$, where $f$ is a test function that lies in a suitable class. The second one writes $\mathcal{E}^{2}_{\mathcal{W}}:=(p-p_{N})(0,t,x,y)$, where $y\mapsto p(0,t,x,y)$ (resp. $y\mapsto p_{N}(0,t,x,y)$) denotes the density of the unique solution $X_t$ of \eqref{skew:diff} taken at time $t$ and starting from $x$ at time $0$ (resp. of $X^{N}_t$ given by the scheme \eqref{Euler:scheme} at time $t$ and starting from $x$ at time $0$) when it exists. The problem of interest is to give a bound or an expansion of these two quantities in terms of the discretization step $h$. 

Let us note that the two quantities $\mathcal{E}^{1}_{\mathcal{W}}$ and $\mathcal{E}^{2}_{\mathcal{W}}$ are of a different nature and require different techniques and methodology depending on the considered class of test functions. Indeed, in the case of SDE driven by a Brownian motion ($\alpha=1/2$ i.e. without local time), provided the test function $f$, and the coefficients $b$ and $\sigma$ are smooth and $f$ is of polynomial growth, Talay and Tubaro \cite{tala:tuba:90} derived an error expansion at order $1$ for $\mathcal{E}^{1}_{\mathcal{W}}$. The weak approximation error for Lvy driven SDEs has been studied in Jacod \& al. \cite{Jacod:kurtz:meleard:protter} under appropriate smoothness of coefficients and the test function $f$. In the case of SDEs driven by a Brownian motion, the same result may be extended to bounded Borel functions under a non-degeneracy assumption of H\"ormander type on the diffusion, see Bally and Talay \cite{ball:tala:96:1} or to the case of the density \cite{ball:tala:96:2}. In the uniformly elliptic setting, Konakov and Mammen \cite{kona:mamm:02}, in the Brownian case, and Konakov and Menozzi \cite{kona:meno:10}, for stable driven SDEs, successfully derived an expansion for $\mathcal{E}^{2}_{\mathcal{W}}$ in powers of $h$ by using a continuity approach known as the parametrix method as developed in McKean and Singer \cite{mcke:sing:67} and Friedmann \cite{friedman:64}. Roughly speaking, it consists in expanding the transition density of the initial equation around a process with frozen coefficients for which explicit expression of the density and its derivatives are available. This approach seems to be quite robust and useful since it can be applied in various contexts such that discrete Markov chains \cite{kona:mamm:00}. However, little attention has so far been given to the case of SDEs with non-smooth coefficients.

In the classical case of uniformly elliptic diffusion processes, we mention the work of Mikulevicius and Platen \cite{Miku:Platen} who established an error bound for the weak approximation error $\mathcal{E}^{1}_{\mathcal{W}}$ of order $h^{\eta/2}$ provided $f\in \mathcal{C}^{2+\eta}([0,T]\times \R^{d})$ and the coefficients $b$ and $\sigma$ are $\eta$-H\"{o}lder continuous in space and $\eta/2$-H\"{o}lder continuous in time. We also refer the reader to \cite{miku:2012} and \cite{miku:zhang:2015} for some recent extensions of this result to the case of Lvy driven SDEs. 
 
More recently, Konakov and Menozzi \cite{kona:koz:menozzi} also derived an upper bound for $\mathcal{E}^{2}_{\mathcal{W}}$ of order $h^{\frac{\eta}{2}-C\psi(h)}$, where $\psi(h)$ is a slowly varying factor that goes to zero as $h\rightarrow0$ under the assumption that the coefficients $b$ and $\sigma$ are H\"older continuous. Their strategy consists in introducing perturbed dynamics of the considered SDE and its scheme by suitably mollifying the coefficients of both dynamics and to quantify the distance between the densities and their respective perturbations. Let us mention that another perturbative approach has been considered in Kohatsu-Higa \& al. \cite{kohatsu:Lejay:Yasuda} for the weak approximation error $\mathcal{E}^{1}_{\mathcal{W}}$ with non-smooth coefficients. In order to establish an error bound between the transition densities of \eqref{skew:diff} and \eqref{Euler:scheme}, we rely on the parametrix methodology. Our approach allows to establish that this difference is of order $h^{\eta/2}$, without any additional varying factor (and time singularity) as in Theorem 1 in \cite{kona:koz:menozzi}, that is to extend to the case of skew diffusions the result in \cite{Miku:Platen} and to handle the densities itself (note again that $\alpha=1/2$ corresponds to the case of one dimensional time-homogeneous Brownian SDEs). One of the main advantages of the parametrix approach is the removal of the drift in the analysis of the approximation error. In particular, we remove the H\"{o}lder regularity assumption of the drift coefficient $b$ by allowing it to be only bounded measurable.

The paper is organized as follows, we first give our standing assumptions and notations in Section \ref{assumptions:notations:sec}. We state our main results in Section \ref{main:results}. Section \ref{proof:gauss:bound} is dedicated to the proofs of Aronson's estimates and the weak approximation error on the densities. The main tool for both results is a discrete parametrix representation of McKean-Singer type for the density of the scheme, see \cite{mcke:sing:67} and \cite{kona:mamm:00}, \cite{kona:mamm:02}. In Section \ref{Appendix} we prove some key technical lemmas that are used in our proofs.
 
\subsection{Assumptions and notations}\label{assumptions:notations:sec}
We here specify some assumptions on the coefficients $b$ and $\sigma$. 
 \begin{trivlist}
\item[\A{HR}] The drift $b$ is bounded measurable and $a=\sigma^2$ is $\eta$-H\"older continuous, for some $\eta \in (0,1]$. That is, there exists a positive constant $L$ such that
$$
\sup_{x\in \R} |b(x)| + \sup_{(x,y)\in \R^2, x\neq y} \frac{|a(x)-a(y)|}{|x-y|^{\eta}} < L.
$$

\item[\A{HE}] The diffusion coefficient is uniformly elliptic that is there exists $\lambda >1$ such that for every $x \in \R^2$, $\lambda^{-1} \leq a(x) < \lambda$. Since $\sigma$ is continuous, without loss of generality, we may assume that $\sigma$ is positive.
 \end{trivlist}

In the following we will denote by $C$ a generic positive constant that may depend on $b, \sigma, T$. We reserve the notation $c$ for constants depending on $\lambda, \eta, b, \sigma$ but not on $T$. Importantly, the constants $C,c$ are uniform with respect to the discretization time step $h$. Moreover, the value of both $C,c$ may eventually change from line to line. The notation $g_{C}$ stands for the Gaussian kernel, namely $g_{C}(y-x) := (1/(2\pi C))^{1/2} \exp(-(y-x)^2/(2C))$. We finally define the Mittag-Leffler function $E_{\alpha,\beta}(z) = \sum_{n\geq 0} z^{n}/\Gamma(\alpha n +\beta)$, $z\in \mathbb{R}$, $\alpha,\ \beta>0$.

\section{Main results}\label{main:results}
Before stating our results, let us first justify that under assumptions \A{HR} and \A{HE}, the random variable $X^{N}_{t_j}$, $j \in \leftB 1, N\rightB$, given by the discretization scheme \eqref{Euler:scheme} admits a positive density. For $x\in \R$, $0\leq j < i \leq N$ and any bounded Borel function $f$, one has
\begin{equation}
\label{exp:scheme}
\E[f(X^{N}_{t_i})| X^{N}_{t_j}=x] = \int_{\R^{i-j-1} \times \R} p_{N}(t_j, t_{j+1},x,y_{j+1}) \times \cdots \times p_{N}(t_{i-1},t_i,y_{i-1},y_i) f(y_i) dy_{j+1} \cdots dy_{i}
\end{equation}

\noindent where $p_{N}(t_k, t_{k+1},y_k,y_{k+1})$ stands for the density of the random variable $ X^{N}_{t_{k+1}} = y_k+  b(y_k) h + \sigma(y_k) (W_{t_{k+1}}-W_{t_k}) + (2\alpha-1) L^{0}_{h}(X^{N}(t_k, y_k))$, which satisfies $p_{N}(t_k, t_{k+1},y_k,y_{k+1})= q(h,y_k,y_{k+1})/\sigma(y_k)$, where $q(h,y_k,y_{k+1})$ stands for the density at time $h$ and terminal point $y_{k+1}$ of the skew Brownian motion starting from $y_k/\sigma(y_k)$ at time $0$ with constant drift part $b(y_k)/\sigma(y_k)$. Again, we refer to \cite{etore:martinez} for the exact expression of $q$. From equation \eqref{exp:scheme}, we clearly see that under assumptions \A{HR} and \A{HE}, the discretization scheme \eqref{Euler:scheme} admits a positive transition density, that we will denote $p_{N}(t_i,t_j,x,y)$, for any $0\leq t_i < t_j \leq T$, $(x,y) \in \R \times \R^{*}$. In particular, a Gaussian upper-bound has been established in \cite{kohatsu:taguchi:zhong} for the transition density of the skew diffusion \eqref{skew:diff} under \A{HR} and \A{HE}. Our first result is to prove similar Aronson's estimate, that is a Gaussian upper estimate but also a lower bound hold for the discretization scheme \eqref{Euler:scheme}.

\begin{THM}(Two sided Gaussian estimates for the scheme)\label{gauss:bound:scheme}
Under \A{HR} and \A{HE}, there exist two constants $C:=C(T,b,\sigma), c:=c(\lambda,\eta)>1$ such that for every $0 \leq j < i \leq N$,
$$
\forall (x,y) \in \R\times \R^{*},\ C^{-1} g_{c^{-1}(t_i-t_j)}(y-x) \leq p_{N}(t_j,t_i,x,y) \leq C g_{c(t_i-t_j)}(y-x).
$$

\end{THM}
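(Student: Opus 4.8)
The strategy is to represent the transition density $p_N(t_j,t_i,x,y)$ via a discrete parametrix (McKean--Singer / Konakov--Mammen) expansion, with frozen-coefficient skew Brownian motions with constant drift as the proxy process, and then bound the resulting series term by term using the known explicit (two-sided Gaussian) estimates for the density $q$ of the skew Brownian motion with constant drift. Concretely, write $\widetilde p_N(t_j,t_i,x,y)$ for the density obtained by freezing $\sigma$ and $b$ at the terminal point $y$ (or at the running point; the choice matters only for the bookkeeping of the Hölder increments), so that $p_N = \sum_{r\geq 0} \widetilde p_N \otimes_N H^{(\otimes_N r)}$, where $\otimes_N$ denotes the discrete space-time convolution along the grid $t_j < t_{j+1} < \cdots < t_i$ and $H$ is the parametrix kernel $H(t_k,t_{k+1},z,z') = (L_{t_k} - \widetilde L_{t_{k+1}})\widetilde p_N(t_k,t_{k+1},z,z')$ built from the difference of the generators of the true one-step scheme and the frozen one. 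The point is that $q$, being the density of a skew BM with constant drift and constant diffusion coefficient, satisfies sharp two-sided Gaussian bounds (by the explicit formula of \'Etor\'e--Martinez, and because adding a bounded drift only perturbs the Gaussian constants over a time step $h$), so each factor $p_N(t_k,t_{k+1},\cdot,\cdot) = q(h,\cdot,\cdot)/\sigma(\cdot)$ is itself squeezed between Gaussian kernels with the right scaling.

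First I would establish the one-step two-sided bound $C^{-1} g_{c^{-1}h}(y-x) \leq p_N(t_k,t_{k+1},x,y) \leq C g_{ch}(y-x)$, which follows directly from \A{HE}, \A{HR}, and the explicit density of the skew BM with constant drift; the skewness parameter $\alpha\in(0,1)$ only affects the multiplicative constant, not the Gaussian decay. The upper bound of the theorem is then the easier half: one applies the one-step upper bound together with the semigroup/convolution property implicit in \eqref{exp:scheme}, using the standard Gaussian reproduction inequality $\int_\R g_{c_1 s}(x-z) g_{c_2 t}(z-y)\,dz \leq C g_{c(s+t)}(y-x)$ iterated along the grid. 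This gives $p_N(t_j,t_i,x,y) \leq C^{i-j} g_{c(t_i-t_j)}(y-x)$, and one must be careful that the geometric constant $C^{i-j}$ does not blow up: this is handled by absorbing one power of $C$ per step into the time-dependent constant using that $i-j \leq N = T/h$ — more precisely one runs the parametrix series, where the $r$-th term carries a factor controlled by $(c h^{\eta/2})^r \times$ (a Beta-function product) whose sum converges to a Mittag-Leffler function $E_{\eta/2,\cdot}(c(t_i-t_j)^{\eta/2})$, bounded by $C(T)$. The smoothing effect of the Hölder increment in $H$ — each convolution against $H$ gains a factor $h^{\eta/2}/(t_i-t_k)^{1-\eta/2}$ up to Gaussian factors — is what keeps the series summable; this requires the off-diagonal Gaussian estimates on the space derivatives $\partial_z \widetilde p_N$ and $\partial^2_z \widetilde p_N$, which for the skew-BM proxy one gets from the explicit density formula (away from the singular point $0$, with the singularity being integrable and not affecting the Gaussian bound, exactly as in the continuous case treated in \cite{kohatsu:taguchi:zhong}).

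The lower bound is the genuinely harder part and the main obstacle. The standard route (Konakov--Mammen, and in the continuous setting Aronson / the reproduction method) is the following: the diagonal regime is easy, since on the set where $|y-x| \lesssim \sqrt{t_i-t_j}$ the leading parametrix term $\widetilde p_N$ already dominates and the remaining series is a bounded perturbation, giving $p_N \geq C^{-1} g_{c^{-1}(t_i-t_j)}(y-x)$ there. For the off-diagonal regime $|y-x| \gg \sqrt{t_i-t_j}$ one uses a chaining (iteration) argument: insert $\mathcal{O}(|y-x|^2/(t_i-t_j))$ intermediate points along the segment from $x$ to $y$, at equally spaced times, apply the diagonal lower bound on each short link via the Chapman--Kolmogorov relation built into \eqref{exp:scheme}, and multiply. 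One must check that the number of links is $\lesssim N$ so that the reasoning stays within the grid, that the local time contribution $(2\alpha-1)L^0$ does not destroy positivity of the one-step density anywhere (it does not, since $q>0$ on $\R^*$ by \A{HR}, \A{HE}), and that the product of the per-link constants telescopes into a single Gaussian $C^{-1} g_{c^{-1}(t_i-t_j)}(y-x)$ with constants uniform in $h$. The delicate bookkeeping is (i) handling the singular point $y=0$ — but the statement only claims the bound for $y \in \R^*$, so one simply avoids routing intermediate points through $0$, which is possible generically — and (ii) making sure the chaining constant does not degrade as $h\to 0$, which is where the uniformity of the one-step Gaussian constants (independent of $h$) is essential. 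I expect assembling this lower-bound chaining argument, adapted to accommodate the skew term and the merely-Hölder diffusion coefficient, to be the core technical work.
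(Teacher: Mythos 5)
Your overall architecture matches the paper's proof: a discrete McKean--Singer parametrix expansion around the frozen skew Brownian motion, the smoothing estimate $|H_N(t_j,t_{j'},x,x')|\leq C(t_{j'}-t_j)^{-1+\eta/2}g_{c(t_{j'}-t_j)}(x'-x)$ iterated into Beta-function products and summed into a Mittag-Leffler bound for the upper estimate, and then a short-time diagonal lower bound (leading frozen term minus an $O((t_{j'}-t_j)^{\eta/2})$ perturbation from the series) followed by a chaining argument for the off-diagonal regime. Your remark that the naive one-step Chapman--Kolmogorov iteration produces an uncontrolled $C^{i-j}$ and that the parametrix series is what replaces it is also exactly the paper's point.

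There is, however, one genuine gap in your off-diagonal step. You propose to place $M=\mathcal{O}(|y-x|^2/(t_i-t_j))$ intermediate points and to ``check that the number of links is $\lesssim N$ so that the reasoning stays within the grid.'' That check cannot be made: the lower bound must hold for all $(x,y)$, so $M$ can be arbitrarily large while $i-j$ may be as small as $2$; in the regime $\delta:=(t_i-t_j)/M<h$ several chaining times fall strictly between consecutive discretization times and the Chapman--Kolmogorov relation \eqref{exp:scheme}, which only links grid times, does not apply to the individual links. This is precisely why the paper first extends the diagonal lower bound to arbitrary (non-grid) times by introducing the conditional kernels $p_N^{y}(s,t,x,x')$ (the density of $X^N_t$ given $X^N_{\phi_N(s)}=y$, $X^N_s=x$) and proving the sub-grid diagonal estimate \eqref{lower:bound:diag:continuous:time}, and why its chaining step splits the links according to which grid interval $[t_k,t_{k+1})$ they fall into, conditioning on $\mathcal{F}$ at the last chaining time of each interval. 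Without this device (or an equivalent treatment of the case $\delta<h$), your chaining argument does not go through; the rest of the concerns you raise (the point $y=0$, uniformity of constants in $h$, the skew term affecting only multiplicative constants) are indeed harmless, as in the paper.
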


We rely on a parametrix expansion of the density $p_{N}(t_j,t_i,x,y)$ to prove Theorem \ref{gauss:bound:scheme}. Such bounds were obtained in \cite{lem:men:2010} for the discretization schemes of uniformly elliptic diffusions and of some degenerate Kolmogorov processes which in turn allowed to derive concentration bounds for the statistical error in the Monte Carlo simulation method. The proof of Theorem \ref{gauss:bound:scheme} is postponed to Section \ref{proof:gauss:bound}.

Our second result concerns the weak approximation error $\mathcal{E}^{2}_{\mathcal{W}}= (p-p_{N})(0,t_i,x,y)$. Notably, we provide an error bound for the difference between the densities of the skew diffusion \eqref{skew:diff} and its approximation scheme \eqref{Euler:scheme}. Its proof is also postponed to Section \ref{proof:gauss:bound}.

\begin{THM}[Error bound on the difference between the densities]\label{weak:error:scheme} Assume that \A{HR} and \A{HE} hold. Then, there exists a constant $c:=c(\lambda,\eta)>1$ such that, for all $0\leq t_j < t_i \leq N$, one has 
$$
\forall (x,y) \in \times \R \times \R^{*}, \ |(p-p_{N})(t_j,t_i,x,y)| \leq C(T,b,\sigma) h^{\eta/2} g_{c (t_i - t_j)}(y-x)
$$

\noindent where $T\mapsto C(T,b,\sigma)$ is a non-decreasing positive function. 
\end{THM}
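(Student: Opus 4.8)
The plan is to compare the parametrix (McKean--Singer) expansions of $p$ and $p_{N}$. Introduce the \emph{frozen, driftless} proxy: for $z\in\R$ let $\tilde p^{z}(s,t,x,\cdot)$ be the transition density of $d\tilde X_{u}=\sigma(z)\,dW_{u}+(2\alpha-1)\,dL^{0}_{u}(\tilde X)$, whose explicit form and Gaussian-type bounds on its first two spatial derivatives are the content of the technical lemmas of Section~\ref{Appendix}. With $L_{x}=\tfrac12 a(x)\partial_{xx}+b(x)\partial_{x}$ (the generator of \eqref{skew:diff}, with the skew transmission condition at $0$) and $\tilde L^{z}_{x}=\tfrac12 a(z)\partial_{xx}$, set
$$
H(s,t,x,z):=\big(L_{x}-\tilde L^{z}_{x}\big)\tilde p^{z}(s,t,x,z)=\Big(\tfrac12\big(a(x)-a(z)\big)\partial_{xx}+b(x)\partial_{x}\Big)\tilde p^{z}(s,t,x,z).
$$
After a preliminary mollification of $b$ (so that $L_{x}$ acts classically), the usual parametrix argument — Duhamel's formula iterated, plus the Gaussian bounds of Section~\ref{Appendix} to pass to the limit — yields $p(s,t,x,y)=\sum_{n\geq0}\big(\tilde p\otimes H^{(n)}\big)(s,t,x,y)$, where $\otimes$ denotes space--time convolution and $H^{(0)}$ the identity kernel (so the $n=0$ term is $\tilde p^{y}(s,t,x,y)$).

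On the discrete side, by construction $p_{N}(t_{k},t_{k+1},w,\cdot)$ is the density of a skew Brownian motion with coefficients frozen at $w$; decomposing each factor in \eqref{exp:scheme} as this one-step density plus its difference with the proxy $\tilde p^{y}$ frozen at the terminal point, and collecting terms, gives the discrete parametrix representation already exploited for Theorem~\ref{gauss:bound:scheme}:
$$
p_{N}(t_{j},t_{i},x,y)=\sum_{n\geq0}\big(\tilde p\otimes_{N}H_{N}^{(n)}\big)(t_{j},t_{i},x,y),
$$
with $\otimes_{N}$ the discrete (time-grid) space--time convolution and $H_{N}$ the corresponding one-step discrete kernel. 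The key point is that the driftless constant-coefficient skew Brownian motion is simulated \emph{exactly} by the scheme, so the $n=0$ terms of the two series coincide, both equalling $\tilde p^{y}(t_{j},t_{i},x,y)$.

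Subtracting the two series and using this cancellation,
$$
(p-p_{N})(t_{j},t_{i},x,y)=\sum_{n\geq1}\Big(\tilde p\otimes H^{(n)}-\tilde p\otimes_{N}H_{N}^{(n)}\Big)(t_{j},t_{i},x,y).
$$
I would telescope each summand over its $n$ factors, so that it becomes a sum of elementary errors of two types: (i) replacing, in one factor, the continuous convolution by its Riemann-sum version $\otimes_{N}$ while keeping $H$; (ii) replacing, in one factor, $H$ by $H_{N}$. For (i) one uses the smoothing bound $|H(s,t,x,z)|\leq C(t-s)^{-1+\eta/2}g_{c(t-s)}(z-x)$ — the gain $(t-s)^{\eta/2}$ coming from $|a(x)-a(z)|\leq L|x-z|^{\eta}$ against the Gaussian, and the drift contribution $b(x)\partial_{x}\tilde p^{z}$ being reabsorbed by an integration by parts transferring the derivative onto the neighbouring factor — so the convolution integrand has only integrable endpoint singularities of order $-1+\eta/2$; the gap between $\int_{t_{j}}^{t_{i}}$ and the grid sum of such a function is $O(h^{\eta/2})$ times the corresponding Gaussian. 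For (ii), since the scheme's one-step density solves a frozen forward equation, $H_{N}$ over a step equals the time-average of $H$ over that step up to an error governed by the spatial variation of $a$ over a window of size $h^{1/2}$, hence $|(H-H_{N})(t_{k},t_{k+1},w,w')|\leq C h^{\eta/2}(t_{k+1}-t_{k})^{-1+\eta/2}g_{c(t_{k+1}-t_{k})}(w'-w)$ — one extra power $h^{\eta/2}$ relative to $H$.

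It remains to sum. Converting, once and for all, every $|x-z|^{\eta}$ weight into a time weight via $|x-z|^{\eta}g_{c(t-s)}(z-x)\leq C(t-s)^{\eta/2}g_{c(t-s)}(z-x)$ (enlarging $c$ if necessary, but only this once), all remaining Gaussians convolve exactly, $g_{c(t-u)}\ast g_{c(u-s)}=g_{c(t-s)}$, so the spatial constant $c$ does not degrade with $n$; the time weights convolve through Beta functions, so the order-$n$ term is bounded by $C^{n}T^{n\eta/2}\Gamma(\eta/2)^{n}/\Gamma(1+n\eta/2)\cdot h^{\eta/2}g_{c(t_{i}-t_{j})}(y-x)$, and summing over $n$ produces a finite Mittag--Leffler factor $C\,E_{\eta/2,1}\!\big(CT^{\eta/2}\big)$, non-decreasing in $T$ and independent of $h$. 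This gives the claim with $c=c(\lambda,\eta)$ and $C(T,b,\sigma)$ non-decreasing. The main obstacle is step (ii): securing the extra factor $h^{\eta/2}$ in the one-step comparison $H-H_{N}$ uniformly in the start and end points, while simultaneously keeping the Gaussian constants uniform in $n$ across all convolutions; this is precisely where the interplay between the H\"older exponent, the time step and the off-diagonal Gaussian decay must be controlled, via the lemmas of Section~\ref{Appendix} on the frozen skew Brownian density and its derivatives.
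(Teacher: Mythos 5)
Your global architecture is the same as the paper's: compare the two parametrix series, split the error into a ``kernel replacement'' part ($H$ versus $H_N$) and a ``quadrature'' part ($\otimes$ versus $\otimes_N$), and resum with Beta functions into a Mittag--Leffler constant. But the two key estimates on which everything rests are asserted rather than obtained, and in the form you state them they are not available. First, your step (ii): you only compare $H$ and $H_N$ over a single step, whereas in the discrete series $H_N(t_k,t_{j'},\cdot,\cdot)$ appears with arbitrary time separation, so the comparison must be carried out for all $t_{j'}>t_k$; for $j'>j+1$ this is done in the paper by rewriting $H_N$ through the symmetric It\^o--Tanaka formula as $H$ taken between shifted times plus a remainder $\mathscr{R}_N$, see \eqref{HN}--\eqref{RN:ito:tanaka}. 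Moreover your claimed bound $|H-H_N|\leq C h^{\eta/2}(t-s)^{-1+\eta/2}g_{c(t-s)}$ ignores the skew-specific obstruction: $\partial_x\tilde p$ and $\partial^2_x\tilde p$ are not differentiable at the interface $0$, so one cannot apply It\^o/Taylor across it, and the expectation of the local time produces in Lemma \ref{remainder:HminusHN} an extra term of the form $(2\alpha-1)\,h^2(t_{j'}-t_j)^{-\frac{3-\eta}{2}}g_{ch}(x)$, concentrated at scale $\sqrt h$ around $x=0$, which does \emph{not} fit your uniform Gaussian bound and which, when summed along the series, requires a dedicated Cauchy--Schwarz argument to avoid a logarithmic loss at $\eta=1$. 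Your heuristic ``error governed by the spatial variation of $a$ over a window $h^{1/2}$'' captures the diffusion part but misses precisely this local-time contribution, which is the main difficulty of the theorem.

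Second, your step (i) treats the difference between $\int_{t_j}^{t_i}$ and the grid sum as a pure quadrature error for a time factor with an integrable endpoint singularity. The integrand, however, also depends on $s$ through the left factor $p(t_j,s,x,z)$, and under \A{HR} ($b$ merely bounded measurable) one has no a priori differentiability of $t\mapsto p(0,t,x,z)$. The paper must prove a time-increment estimate $|p(0,t+s,\cdot)-p(0,t,\cdot)|\leq C\,\frac{s}{t}\,g_{ct}$ by induction along the parametrix series (Lemma \ref{Lipschitz:param:series:skew:diff}) and, to exploit it, mollify the drift, differentiate the regularized density in time, and pass to the limit; nothing in your proposal supplies this. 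Finally, the ``integration by parts transferring the derivative onto the neighbouring factor'' you invoke for the drift term is neither available (it would require differentiating $b$, which is only bounded measurable) nor needed: the drift contribution to $H$ is already controlled by $C|b|_\infty(t-s)^{-1/2}g_{c(t-s)}$, and one of the points of the parametrix approach here is exactly that the drift requires no regularity. As written, the proof has genuine gaps at both of its load-bearing steps.
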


\begin{REM}Observe that the weak rate $h^{\eta/2}$ is coherent with previous results obtained in the literature for the weak approximation error $\mathcal{E}^1_{\mathcal{W}}$ of (not necessarily time homogeneous) SDE driven by a Brownian motion with H\"older coefficients, see e.g. \cite{Miku:Platen}. In \cite{kona:koz:menozzi}, the weak approximation error $\mathcal{E}^{2}_{\mathcal{W}} $ of the density is proved to be of order $h^{\frac{\eta}{2}-C\psi(h)}$, $\psi(h)= \log_3(h^{-1})/\log_2(h^{-1})$ when the coefficients $b$ and $\sigma$ are $\eta$-H\"older continuous in space and $\eta/2$-H\"older continuous in time. Moreover, in their results, the singularity in time is given by $C (t_i -t_j)^{-(1-\eta/2)\eta/2}$ whereas  this quantity does not appear in our result so it is tighter in this sense. For $\alpha=1/2$, which corresponds to the case of (time homogeneous) diffusion process (since the local time part vanishes), compared to \cite{kona:koz:menozzi}, our result notably removes the slowly varying factor $\psi(h)$ and shows that the drift plays no role in the approximation of the density since we only require $b$ to be a bounded measurable function. This phenomenon is not surprising since one of the advantages of the parametrix approach is the removal of the drift part in the analysis. Eventually, it should be possible to extend our strategy of proof to the case of multi-dimensional Brownian diffusion. 
\end{REM}

\section{Proofs of theorems \ref{gauss:bound:scheme} and \ref{weak:error:scheme}. } \label{proof:gauss:bound}
As already mentioned, the proof of Theorem \ref{gauss:bound:scheme} relies on a parametrix expansion of $p_{N}(t_j,t_i,x,y)$. As a warm-up, we briefly present to the reader the different steps to derive the parametrix expansion for \eqref{skew:diff} as developed in \cite{kohatsu:taguchi:zhong}. We refrain from discussing about the existence of the transition density for equation \eqref{skew:diff} but only present how to derive the expansion in an infinite series and the Gaussian upper-bound from it.

For $\alpha \in (0,1)$, we denote by $\mathcal{D}(\alpha)$ the set of bounded continuous functions $f:\R \rightarrow \R$ with bounded continuous derivatives $f'$ and $f''$ on $\R^{*}$ such that $f'(0+)$ and $f'(0-)$ exists and satisfies $\alpha  f'(0+) = (1-\alpha) f'(0-)$. Then, using the symmetric It\^o-Tanaka formula and the occupation time formula, one proves that the infinitesimal generator $\mathcal{L}$ of the Markov semigroup $(P_t)_{t\geq0}$ generated by \eqref{skew:diff} is given by
$$
\forall f \in D(\alpha), \ \forall x \in \R^{*}, \ \mathcal{L}f(x) = b(x) f'(x) + \frac{a(x)}{2} f''(x).
$$

Moreover, for any $f\in D(\alpha)$, one also obtains
\begin{equation}
\label{back:kolmog}
\forall (t,x) \in \R_+ \times \R^{*}, \ \frac{d P_t f}{dt}(x) = P_t \mathcal{L} f(x).
\end{equation}

We now define the frozen process that will be our main building block to construct the parametrix expansion for the transition density of \eqref{skew:diff}. For $z\in \R$, we consider the unique strong solution $X^{z}$ to the SDE
\begin{equation}
\label{frozen:proc}
X^{z}_t = x + \sigma(z) W_t + (2\alpha-1) L^{0}_t(X^{z}).
\end{equation}

Notice that compared to \eqref{skew:diff}, we froze the diffusion coefficient to $z$ and removed the drift since it will not play any significant role, as it will be clear later on. Its infinitesimal generator $\mathcal{L}^{z}$ writes
$$
\forall f \in D(\alpha), \ \forall x \in \R^{*}, \ \mathcal{L}^{z}f(x) = \frac{a(z)}{2} f''(x).
$$

Under \A{HE}, the transition density function of $(X^{z}_t)_{t\in [0,T]}$ exists and we denote it by $y\mapsto p^{z}(0,t,x,y)$. More precisely, one has to consider the two following cases:
\begin{itemize}
\item[Case 1:] For $x\geq0$, one has
$$
p^{z}(0,t,x,y)  = \left\{ g_{a(z)t}(y-x) + (2\alpha-1) g_{a(z)t}(y+x) \right\} \mbox{\textbf{1}}_{\left\{y \geq 0 \right\}}  + 2(1-\alpha) g_{a(z)t}(y-x)  \mbox{\textbf{1}}_{\left\{y < 0 \right\}}.
$$
\item[Case 2:] For $x<0$, one has
$$
p^{z}(0,t,x,y)  = \left\{ g_{a(z)t}(y-x) + (1-2\alpha) g_{a(z)t}(y+x) \right\} \mbox{\textbf{1}}_{\left\{y < 0 \right\}}  + 2 \alpha g_{a(z)t}(y-x)  \mbox{\textbf{1}}_{\left\{y \geq 0 \right\}}.
$$
\end{itemize}

Now, let $\varepsilon>0$. Noting that $x \mapsto p^{y}(0, \varepsilon,x,y) \in D(\alpha)$, from \eqref{back:kolmog}, for all $y\in \R^{*}$, we write
\begin{align}
\forall (x,y) \in \R \times \R^{*}, \ P_t p^{y}(0,\varepsilon,.,y)(x) - P^{y}_t p^{y}(0,\varepsilon,.,y)(x) & = \int_0^{t} ds \partial_s (P_s P^{y}_{t-s} p^{y}(0,\varepsilon,.,y))(x) \nonumber \\
 & = \int_0^t ds  \left(P_{s}\mathcal{L}P^{y}_{t-s} p^{y}(0,\varepsilon,.,y)(x)  - P_s \mathcal{L}^{y} P^{y}_{t-s} p^{y}(0,\varepsilon,.,y)(x) \right) \nonumber \\
 & = \int_0^t ds  P_{s}(\mathcal{L} - \mathcal{L}^{y}) p^{y}(0,\varepsilon+t-s,.,y)(x) \nonumber  \\
 & = p\otimes H(0,t+\varepsilon,x,y) \label{first:step:param}
\end{align}

\noindent where we used the Chapman-Kolmogorov equation $P^{y}_{t-s} p^{y}(0,\varepsilon,.,y)(x) = p^{y}(0,\varepsilon+t-s,x,y)$, introduced the notations $f\otimes g(s,t,x,y) = \int_s^t du \int_{\R} dz f(s,u,x,z) g(u,t,z,y)$ for the time space convolution and $H(s,t,x,y) = (\mathcal{L}-\mathcal{L}^y)p^{y}(s,t,x,y)$. From now on, we set $\tilde{p}(s,t,x,y) = p^{y}(s,t,x,y)$. In particular, the kernel $H$ writes
$$
H(s,t,x,y) = b(x) \partial_x \tilde{p}(s,t,x,y) + \frac{(a(x)-a(y))}{2} \partial^{2}_x \tilde{p}(s,t,x,y).
$$

Moreover, under assumptions \A{HR} and  \A{HE}, separating the four cases: $x,y\geq0$, $x\geq 0>y$, $y \geq 0>x$ and $x,y<0$, after some cumbersome but simple computations that we do not detail here (see Lemma 4.5 in \cite{kohatsu:taguchi:zhong} for more details), one gets
\begin{equation}
\label{kernel:bound}
\forall (x,y) \in \R \times \R^{*},Ê\ |H(s,t,x,y)| \leq \frac{C}{(t-s)^{1- \frac{\eta}{2}}} g_{c(t-s)}(y-x)
\end{equation}

\noindent with $C(T,b,\sigma, \eta) := C(\lambda, \eta) (|b|_{\infty} T^{\frac{1-\eta}{2}}+1)$ and some constants $C(\lambda, \eta) , c:=c(\lambda,\eta) >1$. This last inequality is the keystone of the parametrix expansion as it shows that the kernel has a \emph{smoothing effect}. Now, from the continuity of $t \mapsto p^{y}(0,t,x,y)$, by dominated convergence, we may pass to the limit as $\varepsilon$ goes to zero in both sides of \eqref{first:step:param}. We notably obtain
\begin{align}
\forall (x,y) \in \R \times \R^{*}, \ p(0,t,x,y) - \tilde{p}(0,t,x,y) & = \int_0^t  \int_{\R} p(0,s,x,z) (\mathcal{L} - \mathcal{L}^{y}) \tilde{p}(s,t,z,y) dz ds \nonumber \\
 & = p\otimes H(0,t,x,y). \label{first:step:param:limit}
\end{align}

Using \eqref{kernel:bound} and \eqref{first:step:param:limit} together with an induction argument, one obtains the following bound
\begin{align}
\label{iter:step:param}
\forall r \geq0, Ê|\tilde{p} \otimes H^{(r)}(s,t,x,y)| & \leq C^{r+1} (t-s)^{r \eta/2} \prod_{i=1}^{r} B\left( 1 + \frac{(i-1) \eta}{2}, \frac{\eta}{2}\right) g_{c(t-s)}(y-x)
\end{align}

\noindent with $H^{(0)}=I$ and $H^{(r)}=H \otimes H^{(r-1)}$, $r\geq1$ and $B(m,n)=\int_0^1 dv (1-v)^{m-1} v^{n-1} $ is the Beta function. In particular, we may iterate the representation formula \eqref{first:step:param:limit}, in order to obtain
\begin{equation}
\forall (x,y) \in \R \times \R^{*}, \ p(0,t,x,y) = \sum_{r\geq0} \tilde{p} \otimes H^{(r)}(0,t,x,y).
\label{repres:formula:param}
\end{equation}

Since the series converge absolutely and uniformly for $(x,y)\in \R \times \R^*$, we deduce that $x \mapsto p(0,t,x,y)$ is continuous on $\R$ and satisfies the following Gaussian upper bound:
\begin{equation}
\label{Gaussian:upperbound:skewdiff}
\forall (t,x,y) \in (0,T] \times \R \times \R^{*}, Êp(0,t,x,y) \leq E_{\eta/2,1}(C(T^{(\frac{(3-2\eta)}{2}}|b|_{\infty}+T^{1-\frac{\eta}{2}})) g_{c t}(y-x)
\end{equation}

\noindent with $C(\lambda,\eta),\ c(\lambda,\eta)>1$. Once again, for more details, we refer the reader to Section 5 in \cite{kohatsu:taguchi:zhong} and notably to Corollary 5.5 for the extension to the case of bounded measurable drift $b$.

\subsection{Parametrix expansion for the density of the approximation scheme \eqref{Euler:scheme}}
In the spirit of \cite{kona:mamm:00} and \cite{lem:men:2010}, we will take advantage of the discrete counterpart of the parametrix technique to obtain two-sided Gaussian bounds for the density of the discretization scheme \eqref{Euler:scheme}. We first need to introduce a discretization scheme with frozen coefficients and the discrete counterpart of the time-space convolution kernel. This will then allow us to establish the representation for the density of the discretization scheme which is similar to \eqref{repres:formula:param}. In order to do this we first prove that the kernel shares a smoothing property similar to \eqref{kernel:bound}. Finally, as in the previous section, the upper bound will directly follow from the parametrix expansion. 

To derive the global lower-bound, we proceed in two steps. By a scaling argument, without loss of generality, we may assume that $T\leq1$. First, the lower bound is obtained on the diagonal $|y-x|^2/(t_i-t_j) \leq K$, $K>0$, for short time $(t_i-t_j) \leq T < T_0  $, for some $T_0$ to be specified later on. To obtain the off-diagonal bound, we proceed using a chaining argument as usually done in this context, see Chapter VII in Bass \cite{bass:97}, Kusuoka and Stroock \cite{kusu:stro:87} and Lemaire and Menozzi \cite{lem:men:2010} in the case of discretization schemes. We briefly recall these steps for sake of completeness.

We begin our program by introducing the discrete frozen scheme which is the discrete ``analogue'' of \eqref{frozen:proc}. For fixed $x, x' \in \R$, $0\leq j <j' \leq N$, we define the frozen scheme $(\tilde{X}^{N}_{t_i})_{i \in \leftB j,j' \rightB}$ by
\begin{equation}
\label{discretization:frozen:scheme}
\tilde{X}^{N}_{t_j}=x,\ \tilde{X}^{N}_{s} = \tilde{X}^{N}_{t_i} + \sigma(x') (W_{s}-W_{t_i}) + (2\alpha-1) L^{0}_{s-t_i}(\tilde{X}^{N}(t_i,\tilde{X}^{N}_{t_i})),
\end{equation}

\noindent for $s\in [t_i,t_{i+1}]$ and $i=j, \cdots, j'-1$.

From now on, $p_{N}(t_j,t_{j'},x,.)$ and $\tilde{p}_{N}^{t_{j'},x'}(t_j,t_{j'},x,.)$ denote the transition densities between times $t_j$ and $t_{j'}$ of the discretization schemes \eqref{Euler:scheme} and \eqref{discretization:frozen:scheme}. For sake of simplicity, we will use the notation $\tilde{p}_{N}(t_j,t_{j'}, x,x') = \tilde{p}_{N}^{ t_{j'},x'}(t_j,t_{j'},x,x')$.

We also introduce the discrete counterpart of the infinitesimal generators considered so far. For a smooth function $g:\R \rightarrow \R$, a fixed $x'\in \R^{*}$ and $j=0, \cdots, j'-1$, we define
\begin{equation}
\mathcal{L}^{N}_{t_j}g(x) = \frac{\E[g(X^{N}_{t_j +h})|X^{N}_{t_j}=x]-g(x)}{h}, \ \ \mbox{ and } \ \tilde{\mathcal{L}}^{N}_{t_j} g(x) = \frac{\E[g(\tilde{X}^{N}_{t_j +h}) | \tilde{X}^{N}_{t_j}=x]-g(x)}{h}
\label{discrete:generator}
\end{equation}

\noindent for $0\leq j <j' \leq N$, the discrete kernel
\begin{equation}
\label{discrete:generator}
H_{N}(t_j,t_{j'},x,x') = \left(\left(\mathcal{L}^{N}_{t_j} - \tilde{\mathcal{L}}^{N}_{t_j}\right) \tilde{p}(t_j+h,t_{j'},.,x')\right)(x)
\end{equation}

\noindent and finally the discrete time-space convolution type operator $\otimes_{N}$ as follows
$$
(g\otimes_N f)(t_j,t_{j'},x,x') = \sum_{i=j}^{j'-1} h \int_{\R} g(t_j,t_{i},x,z) f(t_{i},t_{j'},z,x') dz
$$

\noindent with the convention that $\sum_{i=j}^{j'-1}\cdots=0$ if $j\geq j'$. From \eqref{discrete:generator}, it is easily seen that
$$
H_{N}(t_j,t_{j'},x,x') = h^{-1} \int_\R (p_{N} - \tilde{p}_{N}^{t_{j'},x'})(t_{j},t_{j+1},x,z) \tilde{p}(t_{j+1},t_{j'},z,x') dz
$$

Analogously to \cite{kona:mamm:00}, we define the convolution as follows: $g\otimes_N H^{(0)}_{N} = g$ and for $r\geq1$, $g\otimes_{N} H^{(r)}_N = (g\otimes_N H^{(r-1)}_N) \otimes_N H_N$. The following parametrix expansion of $p_N$ follows from the same arguments to those employed in Lemma 3.6, \cite{kona:mamm:00}. The proof is omitted.
\begin{PROP}\label{prop:param:exp}For $0\leq j < j' \leq N$, one has
\begin{equation}
\label{param:series:euler:scheme}
\forall (x,x')\in \R\times \R^{*}, \ p_N(t_j,t_{j'},x,x') = \sum_{r=0}^{j'-j} \left(\tilde{p} \otimes_N H^{(r)}_N\right)(t_j,t_{j'},x,x') 
\end{equation}

\noindent where we use the convention $\tilde{p}(t_{j'},t_{j'},.,x') = \tilde{p}(t_{j'},t_{j'},.,x')=\delta_{x'}(.)$ in the computation of $\tilde{p} \otimes_N H^{(r)}_N$.
\end{PROP}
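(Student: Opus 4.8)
The plan is to reproduce, in the discrete setting, the continuous parametrix (McKean--Singer) argument recalled above, following the strategy of Lemma 3.6 in \cite{kona:mamm:00}. The backbone is a first-order discrete Duhamel identity relating one step of the true scheme \eqref{Euler:scheme} to one step of the frozen scheme \eqref{discretization:frozen:scheme}, followed by an iteration which terminates after finitely many steps because of the strict time ordering built into $\otimes_N$.

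First I would establish the one-step identity
\[
p_N(t_j,t_{j'},x,x') = \tilde{p}(t_j,t_{j'},x,x') + (p_N \otimes_N H_N)(t_j,t_{j'},x,x').
\]
To this end, write $p_N(t_j,t_{j'},x,x')$ as the $(j'-j)$-fold spatial convolution of the one-step densities $p_N(t_i,t_{i+1},\cdot,\cdot)$, and $\tilde{p}(t_j,t_{j'},\cdot,x')$ as the $(j'-j)$-fold convolution of the one-step densities $\tilde{p}_{N}^{t_{j'},x'}(t_i,t_{i+1},\cdot,\cdot)$ of the frozen scheme. Apply the elementary telescoping identity
\[
A_j\cdots A_{j'-1} - \tilde A_j\cdots \tilde A_{j'-1} = \sum_{k=j}^{j'-1} A_j\cdots A_{k-1}\,(A_k-\tilde A_k)\,\tilde A_{k+1}\cdots \tilde A_{j'-1}
\]
to these kernel compositions, and observe that for fixed $k$,
\[
\int_\R (p_N-\tilde{p}_{N}^{t_{j'},x'})(t_k,t_{k+1},z,w)\,\tilde{p}(t_{k+1},t_{j'},w,x')\,dw = h\,H_N(t_k,t_{j'},z,x'),
\]
which is exactly the rewriting of the discrete kernel $H_N$ recorded right after its definition. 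Summing over $k$ and recognizing $\otimes_N$ gives the claimed identity.

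Then I would iterate: substituting the one-step identity into the factor $p_N(t_j,t_i,x,z)$ sitting inside $p_N\otimes_N H_N$ — noting that there the frozen density $\tilde{p}(t_j,t_i,x,z)$ is automatically frozen at the \emph{current} right end-point $z$, which is precisely the convention used in the definition of $\otimes_N$ and of $\tilde{p}\otimes_N H_N^{(r)}$ — and repeating, one obtains for every $m\geq 0$
\[
p_N(t_j,t_{j'},x,x') = \sum_{r=0}^{m}\bigl(\tilde{p}\otimes_N H_N^{(r)}\bigr)(t_j,t_{j'},x,x') + \bigl(p_N\otimes_N H_N^{(m+1)}\bigr)(t_j,t_{j'},x,x').
\]
It remains to note that the remainder is eventually zero: since the sum $\sum_{i=j}^{j'-1}$ is empty unless $j<j'$ and $(\,\cdot\otimes_N H_N)(t_j,t_i,\cdot,\cdot)$ vanishes unless $i>j$, an induction shows $\bigl(g\otimes_N H_N^{(r)}\bigr)(t_j,t_{j'},\cdot,\cdot)\equiv 0$ whenever $r>j'-j$, for $g\in\{p_N,\tilde{p}\}$. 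Choosing $m=j'-j$ annihilates the remainder and yields the finite sum \eqref{param:series:euler:scheme}, the convention $\tilde{p}(t_{j'},t_{j'},\cdot,x')=\delta_{x'}$ entering only in the top term $r=j'-j$ (and showing that the series of the statement is genuinely finite).

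The difficulties here are bookkeeping rather than analytic: one must carefully track, throughout the iteration, that the freezing point is always the running right end-point of each sub-convolution. The only analytic point to dispatch is Fubini in the convolutions, which I would handle once and for all by recording that each one-step density $p_N(t_i,t_{i+1},\cdot,\cdot)=q(h,\cdot,\cdot)/\sigma(\cdot)$, as well as each frozen one-step density, is dominated by a Gaussian kernel — immediate from the explicit expression of $q$ in \cite{etore:martinez}, the boundedness of $b$ and \A{HE} — so that every convolution above is absolutely convergent. Equivalently, one first proves the identities tested against an arbitrary bounded continuous $f$ and then removes $f$ by continuity of the densities in the terminal variable, which also legitimizes the formal use of $\delta_{x'}$.
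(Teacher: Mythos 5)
Your argument is correct: the telescoping (Duhamel) identity over the one-step kernels, the identification of the inner integral with $h\,H_N(t_k,t_{j'},z,x')$ via the displayed rewriting of $H_N$, the iteration, and the vanishing of $g\otimes_N H_N^{(r)}$ for $r>j'-j$ reproduce exactly the discrete parametrix argument of Lemma 3.6 in \cite{kona:mamm:00}, which is the proof the paper invokes and omits. Nothing essential is missing.
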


Next result is the keystone for the proof of Theorem \ref{gauss:bound:scheme} as it provides the smoothing property of the discrete convolution kernel.
\begin{lmm}\label{param:iterate:density:scheme:lemma}Under \A{HR} and \A{HE}, there exist a constant $c:=c(\lambda,\eta)>1$ such that for all $0\leq j < j' \leq N$, for all $r \in \leftB0, j'-j \rightB$, one has
\begin{equation}
\label{param:discrete:iterate}
\forall (x,x') \in \R \times \R^{*}, \ |\tilde{p} \otimes_N H^{(r)}_N(t_j,t_{j'},x,x')| \leq  C^{r} (t_{j'}-t_{j})^{r \eta/2} \prod_{i=1}^{r} B\left(1+\frac{(i-1)\eta}{2},\frac{\eta}{2}\right) g_{c(t_{j'}-t_j)}(x'-x)
\end{equation}

\noindent where $T \mapsto  C:=C(T,b,\sigma)$ is a non-decreasing positive function.
\end{lmm}

\begin{proof} We first prove that for all $0\leq j < j' \leq N$, one has
\begin{equation}
\label{key:estimate:discrete:param}
\forall (x,x') \in \R \times \R^{*}, \ |H_{N}(t_j,t_{j'},x,x') | \leq C (t_{j'}-t_{j})^{-1 +\frac{\eta}{2}} g_{c (t_{j'} - t_j)}(x'-x)
\end{equation}

\noindent for a positive constant $c(\lambda,\eta)>1$ and non-decreasing positive function $T\mapsto C(T,b,\sigma)$. For $j'=j+1$, by time-homogeneity, we remark that
$$
\forall (x,x') \in \R \times \R^{*}, \ H_{N}(t_j,t_{j'},x,x') = h^{-1} (p_{N} - \tilde{p})(t_{j},t_{j+1},x,x') = h^{-1} (p_{N} - \tilde{p})(0,h,x,x').
$$

\noindent Let $s\in (0,h]$ fixed. We will prove a more general result, namely
\begin{equation}
\label{diff:p:tildep}
\forall (x,x',z) \in \R \times \R \times \R^{*}, \ \ |(p_{N} - p^{x'})(0,s,x,z)|\leq C (|b|_{\infty} s^{\frac12} +  |x-x'|^{\eta}) g_{cs}(z-x).
\end{equation}

 Then, the result will follow from \eqref{diff:p:tildep} by setting $s=h$, $z=x'$ and by using the space-time inequality: $\forall \lambda>0, \forall p>0, \exists C>0$  s.t. $ \forall x>0,\ x^{p}\exp(-\lambda x)\leq C$. Indeed, from \eqref{diff:p:tildep} with $s=h$, $z=x'$ and the space-time inequality, one gets $|(p_{N} - p^{x'})(0,h,x,x')| = | (p_{N} - \tilde{p})(0,h,x,x') | \leq C (|b|_{\infty} h^{\frac12} + |x-x'|^{\eta}) g_{c h}(x'-x) \leq C (|b|_{\infty} h^{\frac12} + h^{\frac{\eta}{2}}) g_{c h}(x'-x)$ so that \eqref{key:estimate:discrete:param} follows for $j'=j+1$. We now prove \eqref{diff:p:tildep}. Using the exact expression of the density of the skew Brownian motion, we separate the computations in the following eight cases:

\noindent $\bullet$ For $z \geq0$ and $x, x+b(x)s\geq 0$, one has
\begin{align*}
(p_{N} - \tilde{p}^{x'})(0,s,x,z) & = \left\{ g_{a(x)s}(z-x-b(x)s)-g_{a(x')s}(z-x) + (2\alpha-1) \left( g_{a(x) s}(z+x+b(x)s) - g_{a(x')s}(z+x)\right) \right\}.
\end{align*} 
\noindent $\bullet$ For $z \geq0$ and $x\geq 0$, $x+b(x)s < 0$, one has
\begin{align*}
(p_{N} - \tilde{p}^{x'})(0,s,x,z) & =  \left\{ g_{a(x)s}(z-x-b(x)s)-g_{a(x')s}(z-x) + (2\alpha-1) \left( g_{a(x) s}(z-x-b(x)s) - g_{a(x')s}(z+x)\right) \right\}.
\end{align*} 

\noindent $\bullet$ For $z \geq0$ and $x< 0$, $x+b(x)s< 0$, one has
\begin{align*}
(p_{N} - \tilde{p}^{x'})(0,s,x,z)   & =  2 \alpha \left\{ g_{a(x)s}(z-x-b(x)s)-g_{a(x')s}(z-x)\right\}.
\end{align*} 

\noindent $\bullet$ For $z \geq0$ and $x< 0$, $x+b(x)s\geq 0$, one has
\begin{align*}
(p_{N} - \tilde{p}^{x'})(0,s,x,z)  & =  \left\{ g_{a(x)s}(z-x-b(x)s) - g_{a(x')s}(z-x) + (2\alpha-1) \left( g_{a(x) s}(z-x-b(x)s) - g_{a(x')s}(z-x)\right) \right\}.
\end{align*} 

\noindent $\bullet$ For $z <0$ and $x, x+b(x)s\geq 0$, one has
\begin{align*}
(p_{N} - \tilde{p}^{x'})(0,s,x,z)   & = 2 (1-\alpha) \left\{ g_{a(x)s}(z-x-b(x)s) - g_{a(x')s}(z-x)  \right\}.
\end{align*} 

\noindent $\bullet$ For $z <0$ and $x\geq 0$, $x+b(x)s < 0$, one has
\begin{align*}
(p_{N} - \tilde{p}^{x'})(0,s,x,z)   & =  \left\{ g_{a(x)s}(z-x-b(x)s)-g_{a(x')s}(z-x) + (1-2\alpha) \left( g_{a(x) s}(z + x + b(x)s) - g_{a(x')s}(z-x)\right) \right\} .
\end{align*}

\noindent $\bullet$ For $z <0$ and $x< 0$, $x+b(x)s < 0$, one has
\begin{align*}
(p_{N} - \tilde{p}^{x'})(0,s,x,z)   & =  \left\{ g_{a(x)s}(z-x-b(x)s)-g_{a(x')s}(z-x) + (1-2\alpha) \left( g_{a(x) s}(z + x + b(x)s) - g_{a(x')s}(z+x)\right) \right\}.
\end{align*}

\noindent $\bullet$ For $z <0$ and $x< 0$, $x+b(x)s \geq 0$, one has
\begin{align*}
(p_{N} - \tilde{p}^{x'})(0,s,x,z)   & =  \left\{ g_{a(x)s}(z-x-b(x)s)-g_{a(x')s}(z-x) + (1-2\alpha) \left( g_{a(x) s}(z - x - b(x)s) - g_{a(x')s}(z + x)\right) \right\} .
\end{align*} 

We treat the case $z\geq0$, $x \geq 0$, $x+b(x)s < 0$. The estimates for the other cases are similarly obtained and details are omitted. First, from \A{HR} and \A{HE}, one easily gets
$$
| g_{a(x)s}(z-x-b(x)s)-g_{a(x')s}(z-x)| \leq C(|b|_{\infty} s^{\frac12} + |x-x'|^{\eta}) g_{c s}(x'-x)
$$

\noindent for some positive constant $c(\lambda,\eta)>1$ and a non decreasing positive function $T\mapsto C:=C(T,b,\sigma)$.

Moreover, observe that $0 \leq x \leq |b|_{\infty}s$, the signs of $x$ and $z$ being the same, $g_{c s}(z+x) \leq g_{c s}(z-x)$, so that using similar arguments, one gets
\begin{align*}
|g_{a(x) s}(z-x-b(x)s) - g_{a(x')s}(z+x) | & \leq |g_{a(x) s}(z+x - (2x +b(x)s)) - g_{a(x') s}(z+x - (2x +b(x)s))|  \\
& +  |g_{a(x') s}(z+x - (2x +b(x)s)) - g_{a(x')s}(z+x) | \\
& \leq C (|b|_\infty s^{\frac12} + |x-x'|^{\eta}) g_{cs}(z+x) \leq C (|b|_\infty s^{\frac12} + |x-x'|^{\eta}) g_{c s }(z-x). 
\end{align*}

From the above computations, one gets \eqref{diff:p:tildep} and estimate \eqref{key:estimate:discrete:param} clearly follows. Note also that from the previous computations, it also follows that 
\begin{equation}
\forall (s,x,z)\in (0,h] \times \R \times \R, \ p_N(0,s,x,z) \leq C g_{c s}(z-x),
\label{bound:transition:pN}
\end{equation}

\noindent with $C(T,b, \sigma), c(\lambda,\eta)>1$ independent of $N$.

For $j'>j+1$, we remark that $z \mapsto \tilde{p}(t_{j+1},t_{j'},z,x') \in \mathcal{D}(\alpha)$ so that from the symmetric It-Tanaka formula, for all $(x,x')\in \R \times\R^{*}$, one has
\begin{align}
H_{N}(t_j, t_{j'},x,x')  & = h^{-1}\left\{ \E[\tilde{p}(t_{j+1},t_{j'},X^{N,t_{j},x}_{t_{j+1}},x')] - \E[\tilde{p}(t_{j+1},t_{j'},\tilde{X}^{N,t_{j},x}_{t_{j+1}},x')]\right\} \nonumber \\
& = h^{-1}  \int_0^h \left[ b(x)\E[\partial_x\tilde{p}(t_{j+1},t_{j'},X^{N,t_{j},x}_{s},x')] + \frac12 a(x) \E[\partial^2_x\tilde{p}(t_{j+1},t_{j'},X^{N,t_{j},x}_{s},x')] \right] ds\nonumber \\
&  \quad - h^{-1}  \int_0^h \frac12 a(x') \E[\partial^2_x\tilde{p}(t_{j+1},t_{j'}, \tilde{X}^{N,t_{j},x}_{s},x')]  ds \label{HN:without:remainder} \\
& = b(x) \partial_x \tilde{p}(t_{j+1},t_{j'},x,x') + \frac12 (a(x)-a(x'))\partial^{2}_x \tilde{p}(t_{j+1},t_{j'},x,x') + h^{-1} \mathscr{R}_{N}(t_j,t_{j'},x,x') \label{HN}
\end{align}

\noindent with 
\begin{align}
\mathscr{R}_{N}(t_j,t_{j'},x,x') & := b(x) \int_0^h \left\{ \E[\partial_x\tilde{p}(t_{j+1},t_{j'},X^{N,t_{j},x}_{s},x')] - \partial_x\tilde{p}(t_{j+1},t_{j'},x,x') \right\} ds \nonumber \\
 & \quad + \frac12 a(x) \int_0^h \left\{ \E[\partial^2_x\tilde{p}(t_{j+1},t_{j'},X^{N,t_{j},x}_{s},x')] - \partial^2_x\tilde{p}(t_{j+1},t_{j'},x,x') \right\} ds \label{RN:ito:tanaka} \\
 & \quad - \frac12 a(x') \int_0^h \left\{ \E[\partial^2_x\tilde{p}(t_{j+1},t_{j'},\tilde{X}^{N,t_{j},x}_{s},x')] - \partial^2_x\tilde{p}(t_{j+1},t_{j'},x,x') \right\} ds \nonumber .
\end{align}

In order to prove \eqref{key:estimate:discrete:param}, we look at the first term appearing in the right-hand side of \eqref{HN:without:remainder}. Distinguishing the four cases $x,x' \geq 0$; $x\geq 0,x'<0$; $x<0,x' \geq 0$ and $x<0,x'<0$ and using \A{HE}, \A{HR} simple but cumbersome computations show that for $s\in (0,h]$
\begin{align*}
| \E[\partial_x\tilde{p}(t_{j+1},t_{j'},X^{N,t_{j},x}_{s},x')]  | & \leq \int_{\R}p_N(t_j,t_j+s,x,z) |\partial_z\tilde{p}(t_{j+1},t_{j'},z,x')| dz \\
& \leq C (t_{j'}-t_{j+1})^{-\frac12} g_{c (t_{j'}-t_{j+1}+s)}(x'-x) \\
& \leq C (t_{j'}-t_{j})^{-\frac12} g_{c (t_{j'}-t_{j})}(x'-x) 
\end{align*}

\noindent where we used \eqref{bound:transition:pN} and the inequality $t_{j'}-t_{j}\geq 2 h$ for the last inequality. Hence it follows
$$
| h^{-1}  \int_0^h  b(x)\E[\partial_x\tilde{p}(t_{j+1},t_{j'},X^{N,t_{j},x}_{s},x')]  ds | \leq C|b|_{\infty} (t_{j'}-t_{j})^{-\frac12} g_{c (t_{j'}-t_{j})}(x'-x).
$$

For the two remaining terms appearing in the right-hand side of \eqref{HN:without:remainder}, we use the decomposition 
\begin{align*}
 h^{-1} \int_0^h & \left( \frac12 a(x) \E[\partial^2_x\tilde{p}(t_{j+1},t_{j'},X^{N,t_{j},x}_{s},x')]  - \frac12 a(x') \E[\partial^2_x\tilde{p}(t_{j+1},t_{j'}, \tilde{X}^{N,t_{j},x}_{s},x')] \right) ds  \\
  & =  h^{-1} \int_0^h \frac12 (a(x)-a(x')) \E[\partial^2_x\tilde{p}(t_{j+1},t_{j'},X^{N,t_{j},x}_{s},x')]  ds \\
 & \quad + h^{-1} \int_0^h \frac12 a(x') (\E[\partial^2_x\tilde{p}(t_{j+1},t_{j'},X^{N,t_{j},x}_{s},x')]- \E[\partial^2_x\tilde{p}(t_{j+1},t_{j'}, \tilde{X}^{N,t_{j},x}_{s},x')])  ds
\end{align*}

\noindent and observe that using computations similar to the first term and \A{HR}, one gets
$$
| (a(x)-a(x')) \E[\partial^2_x\tilde{p}(t_{j+1},t_{j'},X^{N,t_{j},x}_{s},x')] | \leq C \frac{|x-x'|^\eta}{(t_{j'}-t_j)} g_{c (t_{j'}-t_{j})}(x'-x) \leq C (t_{j'}-t_j)^{-1+\frac{\eta}{2}} g_{c (t_{j'}-t_{j})}(x'-x).
$$

For the last term, from \eqref{diff:p:tildep} and the semigroup property, we obtain
\begin{align*}
| \E[\partial^2_x\tilde{p}(t_{j+1},t_{j'},X^{N,t_{j},x}_{s},x')]- \E[\partial^2_x\tilde{p}(t_{j+1},t_{j'}, \tilde{X}^{N,t_{j},x}_{s},x')] | & \leq \int_{\R} |p_N(0,s,x,z)-\tilde{p}^{x'}(0,s,x,z)| | \partial^2_z \tilde{p}(t_{j+1},t_{j'},z,x') | dz \\
& \leq C (|b|_{\infty} s^{\frac12}+|x-x'|^{\eta}) (t_{j'}-t_{j+1})^{-1} g_{c(t_{j'}-t_{j+1}+s)}(x'-x) \\
& \leq C (t_{j'}-t_{j})^{-1+\frac{\eta}{2}} g_{c(t_{j'}-t_j)}(x'-x)
\end{align*}

\noindent where we again used $t_{j'}-t_{j}\geq 2 h$ and $T\mapsto C(T,b,\sigma)$ is a non-decreasing positive function. Consequently, from the previous computations, we derive the following bound
\begin{align*}
|H_{N}(t_j, t_{j'},x,x')|  & \leq C(T,b,\sigma) (t_{j'}-t_j)^{-1+ \frac{\eta}{2}} g_{c(t_{j'}-t_{j})}(x'-x)
\end{align*}

\noindent which in turn allows to conclude on the validity of \eqref{key:estimate:discrete:param} for all $0\leq j < j'\leq N$. Now one clearly gets
\begin{align*}
|\left(\tilde{p} \otimes_N H_N\right)(t_j,t_{j'},x,x') | & \leq \sum_{i=j}^{j'-1} h \int_{\R} |\tilde{p}(t_j,t_i, x,z)| |H_{N}(t_i, t_{j'},z,x')| dz \\
& \leq C(T,b,\sigma)  \left\{\sum_{i=j}^{j'-1} h (t_{j'}-t_{i})^{-1+\frac{\eta}{2}} \right\} g_{c(t_{j'}-t_{j})}(x'-x) \\
& \leq C(T,b,\sigma) (t_{j'}-t_{j})^{\eta/2} B(1, \frac{\eta}{2}) g_{c(t_{j'}-t_{j})}(x'-x) 
\end{align*}

\noindent where we used the Gaussian upper-bound $|p^{z}(t_{j},t_i,x,z) | \leq C g_{c(t_i-t_{j})}(z-x)$ and the semigroup property for the last but one inequality. Finally, estimate \eqref{param:discrete:iterate} follows by induction.
\end{proof}

\subsection{Proof of Theorem \ref{gauss:bound:scheme}}
The Gaussian upper bound in Theorem \ref{gauss:bound:scheme} directly follows from Proposition \ref{prop:param:exp} and the following bound 
\begin{align*}
\sum_{r\geq0} C^{r} (t_{j'}-t_{j})^{r \eta/2} \prod_{i=1}^{r+1} B\left(1+\frac{(i-1)\eta}{2},\frac{\eta}{2}\right) & \leq \sum_{r\geq0} \left( C \Gamma(\eta/2)(t_{j'}-t_{j})^{\eta/2}\right)^{r} \frac{1}{\Gamma(1+r\eta/2)} \\
& \leq  E_{\eta/2,1}( C(T,b,\sigma)T^{\eta/2}) < \infty.
\end{align*}

It now remains to prove the Gaussian lower-bound. As already mentioned in the beginning of Section \ref{Euler:scheme}, w.l.o.g. we assume that $T\leq 1$. The proof of the lower bound does not depend on the structure of the equation but only relies on the parametrix expansion obtained in Lemma \ref{prop:param:exp} with the underlying Gaussian controls \eqref{param:discrete:iterate}. Hence, we closely follow the arguments in \cite{lem:men:2010} and omit some part of the proof. It is decomposed into two steps that we explain here for sake of completeness. From Proposition \ref{prop:param:exp}, we first obtain the lower-bound in short time $T$ in the diagonal regime that is on compact sets of the underlying Gaussian time-space metric. The Òoff-diagonalÓ bound is then obtained by a chaining argument, see \cite{bass:97}, \cite{kusu:stro:87} and \cite{lem:men:2010}. Finally, to derive the lower bound for an arbitrary fixed time $T$, one may use the semigroup property.

From Proposition \ref{prop:param:exp}, one has
$$
p_{N}(t_j,t_{j'},x,x') = \tilde{p}_{N}(t_{j},t_{j'},x,x') + \bar{p}_{N}(t_{j},t_{j'},x,x'), 
$$

\noindent with $\bar{p}_{N}(t_{j},t_{j'},x,x') := \sum_{r=1}^{j'-j} \left(\tilde{p} \otimes_N H^{(r)}_N\right)(t_j,t_{j'},x,x') $ which satisfies
$$
|\bar{p}_{N}(t_j,t_{j'},x,x')| \leq C (t_{j'}-t_{j})^{\eta/2} g_{c(t_{j'}-t_{j})}(x'-x)
$$

\noindent for some positive constants $C:=C(T,b,\sigma, \lambda),\ c(\lambda,\eta)>1$.

\noindent $\bullet$ \emph{Step 1: Diagonal regime in short time}

From the above decomposition, one has
\begin{align}
p_{N}(t_j,t_{j'},x,x') & \geq C^{-1} g_{c^{-1}(t_{j'}-t_{j})}(x'-x) - C (t_{j'}-t_{j})^{\eta/2} g_{c(t_{j'}-t_{j})}(x'-x) \label{first:step:lower:bound:diagregime}
\end{align}

 \noindent where we used $\tilde{p}_{N}(t_{j},t_{j'},x,x') \geq C^{-1} g_{c^{-1}(t_{j'}-t_{j})}(x'-x)$ by \A{HE}. Let $R\geq1/2$. For $x,x'$ s.t. $|x'-x|^2/(t_{j'}-t_j)\leq 2 R$ and $t_{j'}-t_j \leq T \leq (c\exp(-cR)/(2 C^2 ))^{2/\eta}$, we get
 \begin{align}
 p_{N}(t_j,t_{j'},x,x') & \geq  \frac{C^{-1}}{\sqrt{2\pi (t_{j'}-t_j)}} \left( c^{1/2} \exp\left(-c R \right) - \frac{C^2}{c^{1/2}}(t_{j'}-t_{j})^{\eta/2}\right) \nonumber \\
 & \geq \frac{C^{-1} c^{1/2}}{2 \sqrt{2\pi (t_{j'}-t_j)}}  \exp\left(-c R \right) \nonumber \\
 & \geq  \frac{C^{-1}}{\sqrt{2\pi (t_{j'}-t_j)}} \label{short:time:diag:regime:decay} \\
 & \geq C^{-1} g_{c^{-1}(t_{j'}-t_{j})}(x'-x) \nonumber
 \end{align}
 
 \noindent for some constant $c>1$ and up to a modification of the constant $C$. Hence, there exist two constants $c,C>1$ s.t. for $|x'-x|^2/(t_{j'}-t_j)\leq 2 R$ one has $p_{N}(t_j,t_{j'},x,x') \geq C^{-1} g_{c^{-1}(t_{j'}-t_{j})}(x'-x)$. 
 
 We now extend this first result to arbitrary times $0 \leq s \leq t \leq T$ not necessarily corresponding to discretization times of the scheme. For $0 \leq s < t \leq T$ that do not belong to the time grid and $(x,y,x') \in \R^2 \times \R^{*}$, we define the kernel $p_{N}^{y}(s,t,x,x')$ s.t. for all bounded measurable $f$, $\E[f(X^{N}_t) | X^{N}_{\phi_N(s)} =y, X^{N}_{s}=x] = \int_{\R} f(x') p_{N}^{y}(s,t,x,x') dx'$. More precisely, we prove
\begin{equation}
\label{lower:bound:diag:continuous:time}
 \exists C>1, \forall 0 \leq s < t \leq T, \forall y \in \R, \frac{|x'-x|^2}{(t-s)}\leq \frac{R}{12}, \  p^{N,y}(s,t,x,x') \geqÊ \frac{C^{-1}}{\sqrt{2\pi (t-s)}}.
\end{equation}
 
 If $s$ or $t$ belong to the time grid of the scheme, the following lines of reasoning may be easily adapted. From the semigroup property, assuming that $\phi_N(t) - (\phi_N(s) + h) \geq h$, one has
\begin{equation}\label{lower:bound:before:chaining}
 p_N^{y}(s,t,x,x') \geq \int_{B_{R_0}(s,t,x,x')} p_N^{y}(s,\phi_N(s)+h,x,x_1) p_{N}(\phi_N(s)+h,\phi_N(t), x_1,x_2) p_{N}(\phi_N(t),t,x_2,x') dx_1 dx_2
\end{equation}
 
 \noindent where $B_{R_0}(s,t,x,x') := \left\{x_1 \in \R^{*} : \frac{|x_1-x|^2}{(\phi_{N}(s)+h-s)} \leq R_0 \right\} \times \left\{x_2 \in \R : \frac{|x'-x_2|^2}{(t-\phi_N(t))} \leq R_0 \right\}$ for some $R_0>0$. Now, for $(x_1,x_2)\in \B_{R_0}(s,t,x,x')$, we remark that 
$$
\frac{|x_2-x_1|^2}{(\phi_N(t)-(\phi_{N}(s)+h))} \leq \frac{2 |x_2-x'|^2 + 4 |x'-x |^2 + 4 |x-x_1|^2}{(\phi_N(t)-(\phi_{N}(s)+h))} \leq 6 R_0 + R
$$ 
 
 \noindent where we used $\phi_N(t)-(\phi_{N}(s)+h)) \geq h \geq \phi_N(s)+h -s, t-\phi_N(t)$ and $1/(\phi_N(t)-(\phi_{N}(s)+h)) \leq 3/(t-s)$. Now we set $R_0 = R/2$ so that we have $|x_2-x_1|^2/(\phi_N(t)-(\phi_{N}(s)+h)) \leq 2 R$. Consequently, combining \eqref{short:time:diag:regime:decay} with \eqref{lower:bound:before:chaining},  one gets
$$
p_{N}^{y}(s,t,x,x') \geq C^{-1} (\phi_N(s)+h - s)^{-1/2} (\phi_N(t)-\phi_N(s)-h)^{-1/2} (t-\phi_N(t))^{-1/2} |B_1| |B_2|
$$

\noindent with $B_1 = \left\{x_1 \in \R^{*} : \frac{|x_1-x|^2}{(\phi_{N}(s)+h-s)} \leq R_0 \right\}$, $B_2 = \left\{x_2 \in \R : \frac{|x'-x_2|^2}{(t-\phi_N(t))} \leq R_0 \right\}$ and $|.|$ stands for the Lebesgue measure on $\R$. Finally, since $|B_1| \geq \rho \sqrt{(\phi_{N}(s)+h-s)}$, $|B_2| \geq \rho \sqrt{(t-\phi_N(t))}$ for some $\rho>0$ and $\phi_N(t)-\phi_N(s)-h \leq (t-s)$, we deduce \eqref{lower:bound:diag:continuous:time} if $\phi_N(t) - (\phi_N(s) + h) \geq h$. The case $\phi_N(t) - (\phi_N(s) + h) < h$ is more easily obtained and details are omitted.

\noindent $\bullet$ \emph{Step 2: Chaining and off-diagonal regime in short time}

It remains to deal with the off-diagonal regime that is the case when $0\leq j < j' \leq N$, $(x,x') \in \R\times\R^{*}$, $\frac{|x'-x|^2}{t_{j'}-t_j}\geq 2 R \geq1$. We set $M := \lceil K \frac{|x'-x|^2}{t_{j'}-t_j}  \rceil$, for some $K\geq1$ to be specified later on, $\delta : = (t_{j'}-t_j)/M$ and introduce a new space-time grid $y_k = x + \frac{k}{M}(x'-x)$, $s_{k}=t_j + k \delta$, $k\in \leftB0,M\rightB$. Note that $y_0=x$, $y_M = x'$, $s_0=t_j$ and $s_M = t_{j'}$. We also define for $k\in \leftB1,M-1\rightB$, $B_k =\left\{ x \in \R : |x-y_k| \leq \rho\right\}$, $\rho = |x'-x|/M$ so that $|y_k-y_{k-1}| = |x'-x|/M$, $k \in \leftB1, M \rightB$. Consequently, we deduce
\begin{align}
\forall x \in B_1, |x-x_1| \leq 2 \rho, & \forall (x_k,x_{k+1}) \in B_k \times B_{k+1}, |x_k - x_{k+1}| \leq 3 \rho,\  k=2, \cdots, M-2 \nonumber \\
& \forall x_M\in B_{M}, |x'-x_M| \leq 2\rho. \label{path:deterministic:curve}
\end{align}

We now choose $K$ large enough s.t.
\begin{equation}
\label{set:number:point:grid}
\frac{3\rho}{\sqrt{\delta}}  =  \frac{3 |x'-x|}{\sqrt{M}(t_{j'}-t_j)} \leq \sqrt{\frac{R}{12}}
\end{equation}

\noindent which according to \eqref{lower:bound:diag:continuous:time} yields for $k=2, \cdots, M-2 $
\begin{align}
\forall x_1 \in B_1, p_{N}(s_0,s_1,x,x_1) \geq C^{-1} \delta^{-1/2}, & \forall (x_k,x_{k+1},y) \in B_k \times B_{k+1} \times \R, p_{N}^{y}(s_k,s_{k+1},x_k,x_{k+1}) \geq C^{-1}\delta^{-1/2},\nonumber \\
& \forall (x_{M-1},y) \in B_{M} \times \R, p_{N}^{y}(s_{M-1},s_{M}, x_{M-1},x') \geq C^{-1}\delta^{-1/2}. \nonumber
\end{align}

Before going further we have to distinguish two cases: $\delta<h$ and $\delta\geq h$. We will treat the first one. The case $\delta\geq h$ can be treated with similar arguments. We refer to \cite{bass:97} or \cite{lem:men:2010} for more details. We introduce $I_k :=\left\{\ell \in \leftB0,M-1\rightB : s_{\ell} \in [t_k, t_{k+1})\right\}$, $d_k:= \sharp I_k$, $i \in \leftB1,d_k\rightB$,  $I^{i}_k \in I_k$ with $t_{k} \leq s_{I^{1}_k} \leq \cdots \leq s_{I^{d_k}_k} < t_{k+1}$. Now we write
\begin{align}
p_{N}(t_j,t_{j'},x,x') & \geq 
  \E\left[\mbox{\textbf{1}}_{\left\{ \cap_{k=j}^{j'-2}\cap_{i\in I_k}  X^{N}_{s_i} \in B_i\right\}} P_{j'-1,j} | X^{N}_{t_j}=x \right] \label{step:recur:lower:bound:off:diag}
\end{align}

\noindent with 
\begin{align*}
P_{j'-1,j} & := \E[\mbox{\textbf{1}}_{\left\{\cap_{i\in I_{j'-1}}X^{N}_{s_{i}}\in B_i\right\}}  p_{N}^{X^{N}_{\phi^{N}(s_{M-1})}}(s_{M-1},t_{j'},X^{N}_{s_{M-1}},x')| \mathcal{F}_{s_{I^{d_{j'-2}}_{j'-2}}} ] \\
& = \E[\mbox{\textbf{1}}_{\left\{X^{N}_{s_{I^{1}_{j'-1}}}\in B_{I^{1}_{j'-1}}\right\}} \int_{\prod^{d_{j'-1}}_{i=2}B_{I^{i}_{j'-1}}} p_{N}^{X^{N}_{t_{j'-1}}}(s_{I^{1}_{j'-1}},s_{I^{2}_{j'-1}},X^{N}_{s_{I^{1}_{j'-1}}},x_2)  \\
& \times \prod_{i=2}^{d_{j'-1}}  p_{N}^{X^{N}_{t_{j'-1}}}(s_{I^{i}_{j'-1}},s_{I^{i+1}_{j'-1}},x_i,x_{i+1}) p_{N}^{X^{N}_{t_{j'-1}}}(s_{I^{d_{j'-1}}_{j'-1}},t_{j'},x_{d_{j'-1}},x') \prod_{i=2}^{d_{j'-1}}  dx_i | \mathcal{F}_{s_{I^{d_{j'-2}}_{j'-2}}} ] 
\end{align*}

\noindent so that, noting that $\exists c>0$ s.t. $\forall i \in \leftB 1,M-1 \rightB, |B_i|\geq c \rho$ from \eqref{path:deterministic:curve}, \eqref{set:number:point:grid} and \eqref{lower:bound:diag:continuous:time}, on $\left\{X^{N}_{s_{I^{d_{j'-2}}_{j'-2}}} \in B_{I^{d_{j'-2}}_{j'-2}}\right\}$, one gets
\begin{align*}
P_{j'-1,j} & \geq (C^{-1} \delta^{-1/2})^{d_{j'-1}} (c \rho)^{d_{j'-1}-1} \int_{B_{I^{1}_{j'-1}}} p^{N,X^{N}_{t_{j'-2}}}(s_{I^{d_{j'-2}}_{j'-2}}, s_{I^{1}_{j'-1}}, X^{N}_{s_{I^{d_{j'-2}}_{j'-2}}},x_1) dx_1 \\
& \geq (C^{-1} \delta^{-1/2})^{d_{j'-1}+1} (c \rho)^{d_{j'-1}}.
\end{align*}

Plugging this estimate in \eqref{step:recur:lower:bound:off:diag}, we finally obtain
$$
p_{N}(t_j,t_{j'},x,x') \geq (C^{-1} \delta^{-1/2})^{d_{j'-1}+1} (c \rho)^{d_{j'-1}}\E\left[\mbox{\textbf{1}}_{\left\{ \cap_{k=j}^{j'-2}\cap_{i\in I_k}  X^{N}_{s_i} \in B_i\right\}}  | X^{N}_{t_j}=x \right]
$$

\noindent so that, by induction, up to a modification of $C,c>0$, we get
\begin{align*}
p_{N}(t_j,t_{j'},x,x') & \geq  (C^{-1} \delta^{-1/2})^{M+1} (c \rho)^{M} \\
& \geq C^{-1} (t_{j'}-t_j)^{-1/2} \exp\left(M \log(C^{-1}c(\rho/\sqrt{\delta}))\right) \\
& \geq C^{-1} (t_{j'}-t_j)^{-1/2} \exp\left(- c \frac{|x'-x|^2}{(t_{j'}-t_j)}\right) := C^{-1} g_{c^{-1}(t_{j'}-t_j)}(x'-x)
\end{align*}

\noindent for $\frac{|x'-x|^2}{t_{j'}-t_j}\geq 2 R$. This last bound concludes the proof of Theorem \ref{gauss:bound:scheme}.

\subsection{Proof of Theorem \ref{weak:error:scheme}}

 \noindent $\bullet$ \emph{Strategy of proof:} \
\vspace*{.3cm}

In order to prove an error bound for the difference between the transition densities of \eqref{skew:diff} and \eqref{Euler:scheme}, our strategy is the following. The main point is to compare the two series \eqref{repres:formula:param} and \eqref{param:series:euler:scheme} which differs on account of the discrete nature of time-space convolution operator $\otimes_N$ and the discrete smoothing kernel $H_N$. In order to do this, we introduce for $0\leq j < j' \leq N$,
\begin{equation}
\label{density:param:discrete:convol}
\forall (x,x') \in \R\times \R^*, \,Êp^{d}_N (t_j,t_{j'},x,x') = \sum_{r\geq0} \tilde{p} \otimes_N H^{(r)}(t_j,t_{j'},x,x').
\end{equation}

Arguments similar to those of Lemma \ref{param:iterate:density:scheme:lemma} show that the series \eqref{density:param:discrete:convol} converge absolutely and uniformly on $\R \times \R^*$ and that $p^{d}_N$ satisfies the following Gaussian upper-bound:
\begin{equation}
\label{gaussian:upper:bound:pnd}
\forall (x,x') \in \R\times \R^*, \, p^{d}_N (t_j,t_{j'},x,x') \leq E_{\eta/2,1}( C(|b|_{\infty}T^{\frac12}+T^{\frac{\eta}{2}}) ) g_{c (t_{j'}-t_j)}(x'-x).
\end{equation}

Indeed, from \eqref{kernel:bound} and the semigroup property, one gets $|\tilde{p} \otimes_N H(t_j,t_{j'},x,x')| \leq C(|b|_{\infty}T^{\frac{1-\eta}{2}}+1)(t_{j'}-t_{j})^{\eta/2} B(1, \frac{\eta}{2}) g_{c(t_{j'}-t_{j})}(x'-x)$ which in turn by induction yields for all $r\geq1$
\begin{equation}
\label{param:discrete:iterate:H}
\forall (x,x') \in \R \times \R^{*}, \ |\tilde{p} \otimes_N H^{(r)}(t_j,t_{j'},x,x')| \leq  C^{r} (t_{j'}-t_{j})^{r \eta/2} \prod_{i=1}^{r} B\left(1+\frac{(i-1)\eta}{2},\frac{\eta}{2}\right) g_{c(t_{j'}-t_j)}(x'-x)
\end{equation} 

with $C:= C(\lambda,\eta)(|b|_{\infty}T^{\frac{1-\eta}{2}}+1)$ and \eqref{gaussian:upper:bound:pnd} follows. We omit technical details.

The idea is now to decompose the global error as follows: 
$$
(p-p_N)(t_j,t_i,x,y) = (p-p^{d}_N)(t_j,t_i,x,y) + (p^{d}_N-p_N)(t_j,t_i,x,y).
$$
 
 A similar decomposition has been used in \cite{kona:mamm:02} when the coefficients $b$ and $\sigma$ are smooth. The smoothness of the coefficients notably allows to use Taylor expansions to express the transition density $p$ as the fundamental solution of the underlying parabolic PDE and to use integration by parts (that may be expressed as the duality relation satisfied by the infinitesimal generator when seen as a differential operator) in order to equilibrate time singularities. Obviously, these arguments do not work here under the mild smoothness assumption \A{HR} so that computations become more delicate. In a first step, we express the difference $p^d_N-p_N$ in an infinite parametrix series that involves the difference between the two kernels $H$ and $H_N$. Then, the symmetric It\^o-Tanaka formula \eqref{HN} allows to express the difference $H- H_N$ as the difference of the kernel $H$ between two consecutive discretization times plus a remainder term $\mathscr{R}_N$. We then study the weak approximation rate induced by $\mathscr{R}_N$ in Lemma \ref{remainder:HminusHN}. The main difficulty lies in the non-differentiability of $x\mapsto \partial_x\tilde{p}(t_j,t_{j'},x,x')$ at zero caused by the presence of the local time part in the dynamics of $X^{N}$ and $\tilde{X}^{N}$ which prevents us to use (again) the It-Tanaka formula. In a second step, in order to study $p-p^{d}_N$, we express this difference as an infinite parametrix series that involves the difference between the two convolution operators $\otimes$ and $\otimes_N$. Then, we notably use a (kind of) Lipschitz property in time for the transition density $p$ and a smoothing procedure for the drift part.

\vspace*{.3cm}
\noindent $\bullet$ \emph{Step 1: Error bound on $p^{d}_N-p_N$} \
\vspace*{.5cm}

Our first step consists in comparing $p_N$ with $p^{d}_N$. We first remark that for $r\geq1$
$$ 
\tilde{p} \otimes_N H^{(r)} - \tilde{p} \otimes_N H^{(r)}_N = \left( \left(\tilde{p} \otimes_N H^{(r-1)}\right) \otimes_N \left( H-H_N \right) \right) + \left( \left(\tilde{p}\otimes_N H^{(r-1)} - \tilde{p} \otimes_N H^{(r-1)}_N \right) \otimes_N H_{N}\right)
$$

\noindent so that summing the previous identity from $r=1$ to $r=\infty$ yields
$$
p^{d}_{N} - p_{N} = p^{d}_{N} \otimes_N (H-H_N) + (p^{d}_N - p_{N})\otimes_N H_N.
$$

\noindent Finally, by induction, for $0\leq j < j' \leq N$, we get
\begin{equation}
\label{comparison:pnd:pn}
\forall (x,x') \in \R \times \R^{*}, Ê(p^{d}_N- p_N)(t_j,t_{j'},x,x') = \sum_{r\geq0} \left\{p^{d}_{N} \otimes_N (H-H_N) \right\} \otimes_N H^{(r)}_N(t_j,t_{j'},x,x').
\end{equation}

\begin{lmm}\label{param:iterate:density:scheme:diff:kernel:lemma}Under \A{HR} and \A{HE}, for all $0\leq j < j' \leq N$, for all $r \geq0$, for all $(x,x') \in \R \times \R^{*}$, one has
\begin{equation}
\label{param:iterate:density:scheme:diff:kernel}
 |\left\{p^{d}_{N} \otimes_N (H-H_N) \right\} \otimes_N H^{(r)}_N(t_j,t_{j'},x,x')| \leq  C^{r} h^{\frac{\eta}{2}} (t_{j'}-t_{j})^{r \frac{\eta}{2}} \prod_{i=1}^{r} B\left(1+\frac{(i-1)\eta}{2},\frac{\eta}{2}\right) g_{c(t_{j'}-t_j)}(x'-x)
\end{equation}

\noindent  for some constant $c:=c(\lambda,\eta) \geq1$ and a non decreasing positive function $T\mapsto C:=C(T,b,\sigma)$. 
\end{lmm}

\begin{proof} For $j'=j+1$, one has
$$
(H - H_N)(t_j,t_{j+1},y,z) =  \left(b(y) \partial_x \tilde{p}(t_{j},t_{j+1},y,z) + \frac{1}{2}(a(y)-a(z))\partial^{2}_x \tilde{p}(t_j,t_{j+1},y,z) \right) - h^{-1} (p_{N}-\tilde{p}_N)(t_{j},t_{j+1},y,z).
$$

By the proof of Lemma \ref{param:iterate:density:scheme:lemma}, one gets
$$
h^{-1} (p_{N}-\tilde{p}_N(t_{j},t_{j+1},y,z)) \leq C(T,b,\sigma) \frac{h}{(t_{j'}-t_j)^{2-\frac{\eta}{2}}} g_{c(t_{j'}-t_j)}(z-y)
$$

\noindent and, similarly, by \eqref{kernel:bound} 
$$
\left|b(y) \partial_x \tilde{p}(t_{j},t_{j+1},y,z) + \frac{1}{2}(a(y)-a(z))\partial^{2}_x \tilde{p}(t_j,t_{j+1},y,z) \right| \leq  C(|b|_{\infty}(t_{j'}-t_j)^{\frac{1-\eta}{2}} + 1) \frac{h}{(t_{j'}-t_j)^{2-\frac{\eta}{2}}} g_{c(t_{j'}-t_j)}(z-y)
$$

\noindent so that, combining these two estimates, one gets
\begin{equation*}
\forall (y,z)\in \R \times \R^{*}, \, \, |(H - H_N)(t_j,t_{j'},y,z)| \leq C \frac{h}{(t_{j'}-t_{j})^{2-\frac{\eta}{2}}}  g_{c(t_{j'}-t_j)}(z-y).
\end{equation*}

\noindent where $T\mapsto C:=C(T,b,\sigma)$ is a non-decreasing positive function. For $j'>j+1$, $(y,z)\in \R \times \R^{*}$, using \eqref{HN} we write
\begin{align*}
(H- H_N)(t_j,t_{j'},y,z) & = b(y) \partial_x(\tilde{p}(t_j,t_{j'},y,z) - \tilde{p}(t_j+h,t_{j'},y,z) ) + \frac12 (a(y)-a(z)) \partial^2_x (\tilde{p}(t_j,t_{j'},y,z) - \tilde{p}(t_j+h,t_{j'},y,z)) \\
& \ \ + h^{-1} \mathscr{R}_{N}(t_j,t_{j'},x,x') .
\end{align*}

We treat the first term appearing in the right-hand side of the previous equality. By the mean value theorem and standard computations (separating eventually in four cases as previously done), one has
\begin{align*}
|  \partial^2_x ( \tilde{p}(t_j+h,t_{j'},y,z) - \tilde{p}(t_j,t_{j'},y,z) ) | & \leq C h \int_0^1  \frac{1}{(t_{j'}-t_j - \lambda h)^2} g_{c(t_{j'}-t_j - \lambda h)}(z-y) d\lambda \\
& \leq C \frac{h}{(t_{j'}-t_j)^2} g_{c(t_{j'}-t_j)}(z-y) 
\end{align*}

\noindent where we used the inequality $(t_{j'}-t_j - \lambda h)^{-(2+\frac12)} \leq C (t_{j'}-t_j)^{-(2+\frac12)} $ for some positive constant $C$. Combining the previous bound with \A{HR} implies
$$
\left|  \frac12 (a(y)-a(z)) \partial^2_x ( \tilde{p}(t_j+h,t_{j'},y,z) - \tilde{p}(t_j,t_{j'},y,z) ) \right| \leq C \frac{h}{(t_{j'}-t_j)^{2-\frac{\eta}{2}}} g_{c(t_{j'}-t_j)}(z-y).
$$

Similar computations show that 
$$
| b(y) \partial_x(\tilde{p}(t_j+h,t_{j'},y,z) - \tilde{p}(t_j,t_{j'},y,z) ) | \leq C|b|_\infty \frac{h}{(t_{j'}-t_j)^{\frac32}} g_{c(t_{j'}-t_j)}(z-y)
$$

\noindent which finally implies for $0\leq j < j' \leq N$ 
\begin{align}
\label{first:term:HminusHN}
| b(y) \partial_x(\tilde{p}(t_j,t_{j'},y,z) - \tilde{p}(t_j+h,t_{j'},y,z) ) & + \frac12 (a(y)-a(z)) \partial^2_x (\tilde{p}(t_j,t_{j'},y,z) - \tilde{p}(t_j+h,t_{j'},y,z))| \nonumber\\
& \leq C(|b|_{\infty}(t_{j'}-t_j)^{\frac{1-\eta}{2}}+ 1)  \frac{h}{(t_{j'}-t_{j})^{2-\frac{\eta}{2}}}  g_{c(t_{j'}-t_j)}(z-y).
\end{align}

We will now use the following decomposition
\begin{align}
\forall i \in \leftB 2, N \rightB, \quad p^{d}_{N} \otimes_N (H-H_N) (0,t_i, x,z) & = \sum_{k=0}^{i - 2} h \int_\R du  p^{d}_N(0,t_k,x,u)   (H-H_N)(t_k,t_{i},u,z) \nonumber\\
& \quad + h \int_{\R} p_N^{d}(0,t_{i-1},x,u) (H-H_N)(t_{i-1},t_i, u,z) du. \label{decomposition:first:convolution} 
\end{align}

From \eqref{gaussian:upper:bound:pnd}, \eqref{first:term:HminusHN} and the semigroup property, one gets
\begin{align}
h| \int_{\R} p_N^{d}(0,t_{i-1},x,u) (H-H_N)(t_{i-1},t_i, u,z) du| & \leq C  \frac{h^2}{h^{2-\frac{\eta}{2}}} \int_\R g_{c t_{i-1}}(u-x) g_{ch}(z-u) du \nonumber \\
& \leq C h^{\frac{\eta}{2}} g_{ct_i}(z-x)\label{estimate:border:term}
\end{align}

\noindent where $T \mapsto C:=C(T,b,\sigma)$ is a positive non-decreasing function.

From \eqref{first:term:HminusHN}, Lemma \ref{remainder:HminusHN} and the semigroup property, one gets
\begin{align}
| \sum_{k=0}^{i - 2} h \int_\R du  p^{d}_N(0,t_k,x,u)   (H-H_N)(t_k,t_{i},u,z)  | & \leq C \sum_{k=0}^{i - 2}  \frac{h^2}{(t_i-t_k)^{2-\frac{\eta}{2}}} \int_\R g_{c t_k}(u-x) g_{c(t_i-t_k)}(z-u) du \nonumber \\
&  \quad + C  \sum_{k=0}^{i - 2} \int_\R \left(\frac{h^2}{(t_i-t_k)^{2-\frac{\eta}{2}}} + (2\alpha-1) \frac{h^2}{(t_i-t_k)^{\frac{3-\eta}{2}}} g_{ch}(u) \right) \nonumber \\
&  \quad \quad\times g_{c t_k}(u-x)  g_{c(t_i-t_k)}(z-u) du \nonumber \\
& \leq C h^{\frac{\eta}{2}} \left(\sum_{k=2}^{i} \frac{1}{k^{2-\frac{\eta}{2}}}\right) g_{c t_i}(z-x) \nonumber \\
& \quad + C (2\alpha-1) h^{2} \sum_{k=0}^{i - 2} \frac{1}{(t_i-t_k)^{\frac{3-\eta}{2}}}  \int_\R g_{ch}(u) g_{c t_k}(u-x)  g_{c(t_i-t_k)}(z-u) du\label{temporary:bound:convolution}
\end{align}

\noindent where $T \mapsto C:=C(T,b,\sigma)$ is a non-decreasing positive function. Note that for $\eta<1$, the inequality $g_{ch}(u) \leq (2\pi c)^{-1/2} h^{-1/2}$ allows to achieve the rate $h^{\eta/2}$ and concludes the proof. However, if $\eta=1$, due to the local time part (that is the case  $\alpha\neq 1/2$), this bound provides an error of order $h^{1/2} |\log(h)|$ which is slightly worse than the announced rate. In order deal with this issue, we use the Cauchy-Schwarz inequality and the semigroup property in order to write 
\begin{align*}
\int_\R g_{ch}(u) g_{c t_k}(u-x)  g_{c(t_i-t_k)}(z-u) du & \leq \left(\int_\R g_{c t_k}(u-x) (g_{ch}(u))^2 du \right)^{\frac12} \left(\int_\R g_{c t_k}(u-x) (g_{c(t_i -t_k)}(z-u))^2 du \right)^{\frac12} \\
& \leq  C \frac{1}{h^{\frac14}(t_i-t_k)^{\frac14}} (g_{c t_{k+1}}(x))^{\frac12} (g_{c t_i}(z-x))^{\frac12} \\
& \leq C \frac{t^{\frac14}_i}{h^{\frac14}t^{\frac14}_{k+1}(t_i-t_k)^{\frac14}} g_{c t_i}(z-x).
\end{align*} 

Plugging the last bound into the second term appearing in the right hand side of \eqref{temporary:bound:convolution} yields
$$
h^{2} \sum_{k=0}^{i - 2} \frac{1}{(t_i-t_k)^{\frac{3-\eta}{2}}}  \int_\R g_{ch}(u) g_{c t_k}(u-x)  g_{c(t_i-t_k)}(z-u) du \leq C h^{2-\frac14} \left(\sum_{k=0}^{i-2} \frac{t^{\frac14}_i}{(t_i-t_k)^{\frac{7-2\eta}{4}}t^{\frac14}_{k+1}} \right) g_{c t_i}(z-x).
$$

For $0\leq k\leq \lfloor i/2 \rfloor $, one has $C^{-1} t_i \leq t_i - t_k \leq C t_i$ so that $h^{2-\frac14} \sum_{k=0}^{\lfloor i/2\rfloor} t^{\frac14}_i t^{-\frac14}_{k+1} (t_i-t_k)^{-\frac{(7-2\eta)}{4}} \leq C h^{2-\frac14} t^{-\frac{(3-\eta)}{2}}_i \sum_{k=0}^{\lfloor i/2 \rfloor} t^{-\frac14}_{k+1} \leq C h^{\frac34} t^{-\frac34+\frac{\eta}{2}}_i \leq C h^{\frac{\eta}{2}}$ for some positive constant $C>1$ that does not depend on $i$. For $\lfloor i/2\rfloor< k \leq i-1$, one has $C^{-1} t_i \leq t_{k} \leq C t_i $ so that $h^{2-\frac14} \sum_{k=\lfloor i/2 \rfloor +1}^{i-2} t^{\frac14}_i t^{-\frac14}_{k+1} (t_i-t_k)^{-\frac{(7-2\eta)}{4}} \leq C h^{\frac{\eta}{2}} \sum_{k=1}^{i} k^{-\frac{(7-2\eta)}{4}} \leq C h^{\frac{\eta}{2}}$ for some positive constant $C>1$ that does not depend on $i$. Using the decomposition, $\sum_{k=0}^{i-2} \cdots = \sum_{k=0}^{\lfloor i/2 \rfloor} \cdots + \sum_{\lfloor i/2 \rfloor +1}^{i-2} \cdots $ allows to conclude that $h^{2-\frac14} \sum_{k=0}^{\lfloor i/2\rfloor} t^{\frac14}_i t^{-\frac14}_{k+1} (t_i-t_k)^{-\frac{(7-2\eta)}{4}} \leq C h^{\frac{\eta}{2}}$. From these computations, we come back to \eqref{temporary:bound:convolution} and obtain
$$
| \sum_{k=0}^{i - 2} h \int_\R du  p^{d}_N(0,t_k,x,u)   (H-H_N)(t_k,t_{i},u,z)  |  \leq C h^{\frac{\eta}{2}} g_{c t_i}(z-x).
$$

Combining all the previous estimates, we derive
$$
\forall i \in \leftB 2, N \rightB, \quad |p^{d}_{N} \otimes_N (H-H_N) (0,t_i, x,z)| \leq C h^{\eta/2} g_{c  t_i}(z-x)
$$

\noindent where $T \mapsto C:=C(T,b,\sigma)$ is a non-decreasing positive function and \eqref{param:iterate:density:scheme:diff:kernel} follows easily by induction.
\end{proof}

From Lemma \ref{param:iterate:density:scheme:diff:kernel:lemma}, one clearly obtains
\begin{equation}
\label{pnd:pn:bound}
\forall (x,x') \in \R \times \R^{*}, Ê\left| (p^{d}_N- p_N)(t_j,t_{j'},x,x') \right| \leq C(T,b,\sigma) h^{\eta/2} g_{c (t_{j'}-t_j)}(x'-x). 
\end{equation}

\noindent $\bullet$ \emph{Step 2: Error bound on $p-p^{d}_N$} \ 
\vspace*{.5cm}

Now, to complete the proof of Theorem \ref{weak:error:scheme}, it remains to compare $p$ with $p^{d}_N$. For $r\geq1$, we write
\begin{align*}
\tilde{p} \otimes H^{(r)} - \tilde{p}\otimes_N H^{(r)}& = \left\{ \left(\tilde{p} \otimes H^{(r-1)}\right) \otimes H - \left(\tilde{p} \otimes H^{(r-1)}\right) \otimes_N H \right\}  \\
& + \left\{ \left( \tilde{p} \otimes H^{(r-1)} \right) - \left(\tilde{p} \otimes_N H^{(r-1)}\right) \right\} \otimes_N H
\end{align*}

\noindent so that summing the previous equality from $r=1$ and to infinity, we get
$$
p - p^{d}_N = p \otimes H - p \otimes_N H + (p-p^{d}_N) \otimes_N H.
$$ 

By induction of the previous identity, we obtain the uniformly and absolutely convergent series
\begin{equation}
\label{diff:p:and:pnd}
p - p^{d}_N = \sum_{r\geq 0} \left\{ p \otimes H - p \otimes_N H \right\} \otimes_N H^{(r)}.
\end{equation}

The rest of the proof is devoted to the study of \eqref{diff:p:and:pnd}. By the very definition of the continuous and discrete time space convolution, one has
\begin{align}
\left(p \otimes H - p \otimes_N H \right)(t_j,t_i,x,y) & = \sum_{k=j}^{i-1} \int_{t_k}^{t_{k+1}} \int_{\R} \left\{ p(t_{j},s,x,z) H(s,t_{i},z,y) - p(t_{j},t_k,x,z) H(t_k,t_{i},z,y) \right\} dz ds \nonumber \\
& =  \sum_{k=j}^{i-1} \int_{t_k}^{t_{k+1}} \int_{\R} (p(t_{j},s,x,z) - p(t_j, t_k,,x,z))  H(s,t_{i},z,y)  dz ds \label{first:term:decomposition:pnd} \\
& + \sum_{k=j}^{i-1} \int_{t_k}^{t_{k+1}} \int_{\R}  p(t_j, t_{k}, x, z) (H(s,t_{i},z,y) - H(t_k,t_{i},z,y)) dz ds  \label{second:term:decomposition:pnd}.
\end{align}

We first consider \eqref{first:term:decomposition:pnd} and separate the computations in three terms: 
\begin{align*}
A_1 & =  \int_{t_j}^{t_{j+1}} \left\{ (\int_{\R} p(t_{j},s,x,z) H(s,t_{i},z,y)dz) -  H(s,t_i,x,y) \right\} ds,  \\
A_2 & = \sum_{j+1 \leq k \leq (i+j)/2} \int_{t_k}^{t_{k+1}} \int_{\R} (p(t_{j},s,x,z) - p(t_j, t_k,x,z))  H(s,t_{i},z,y)  dz ds, \\
A_3 & = \sum_{(i+j)/2 < k \leq (i-1)} \int_{t_k}^{t_{k+1}} \int_{\R} (p(t_{j},s,x,z) - p(t_j, t_k,x,z))  H(s,t_{i},z,y)  dz ds.
\end{align*}

First, by the semigroup relation and standard computations, one has
$$
|A_1| \leq C \left\{ \int_{t_j}^{t_{j+1}}  \frac{1}{(t_i-s)^{1-\frac{\eta}{2}}}ds  \right\} g_{c (t_i-t_j)}(y-z) \leq C \frac{h}{(t_i-t_j)^{1-\frac{\eta}{2}}} g_{c (t_i-t_j)}(y-z).
$$

Now, using Lemma \ref{Lipschitz:param:series:skew:diff}, the semigroup property and the inequality $(t_{k}-t_j)^{-1} \leq 2 (t_i -t_{j})^{-1}$ for $j+1 \leq k \leq (i+j)/2$, one gets
\begin{align*}
|A_3| & \leq C \frac{h}{t_i-t_j}  \left(\sum_{j+1 \leq k \leq (i-1)} \int_{t_k}^{t_{k+1}} \frac{1}{(t_i-s)^{1-\frac{\eta}{2}}} ds \right) g_{c (t_i-t_j)}(y-x) \\
& \leq  C  \frac{h}{t_i-t_j} \left( \int_{t_j}^{t_{i}} \frac{1}{(t_i-s)^{1-\frac{\eta}{2}}} ds \right) g_{c (t_i-t_j)}(y-x)  \\
& \leq C \frac{h}{(t_i-t_j)^{1-\frac{\eta}{2}}} g_{c (t_i-t_j)}(y-x) .
\end{align*}

In order to deal with $A_2$, we proceed using a regularization argument in order to obtain the differentiability of $t \mapsto p(0,t,x,y)$ on $[0,T]$ for fixed $(x,y) \in \R \times \R^{*}$. Let us note that this differentiability is not guaranteed under \A{HR} and \A{HE}. From Theorem 174 p.111 of Kestelman \cite{kestelman}, there exists a sequence $(b_N)_{N\in \N^{*}}$ of continuous functions such that:
\begin{equation}
\lim_{N\rightarrow \infty} b_N = b, \, \, a.e. \, \mbox{ and } \, \sup_{N\in \N^{*}} |b_N|_{\infty} \leq |b|_{\infty}.
\end{equation}

We also consider a positive mollifier $\rho$ on $\R$ and write $b^{\varepsilon}_N(x)  = \int_{\R} \varepsilon^{-1} \rho((x-y)/\varepsilon) b_N(y) dy$. For fixed $N$ and $\varepsilon$, $b^{\varepsilon}_{N}$ is smooth and satisfies: for all $x\in \R$, $\lim_{\varepsilon \rightarrow 0} b^{\varepsilon}_N(x) = b_N(x)$, $\sup_{\varepsilon, N}|b^{\varepsilon}_N|_{\infty} \leq |b|_\infty$.
 
 We denote by $p^{\varepsilon,N}$ the transition density of the skew diffusion $X^{\varepsilon,N}$ obtained by replacing the drift $b$ in dynamics \eqref{skew:diff} by $b^{\varepsilon}_N$. From the parametrix expansion \eqref{repres:formula:param} obtained in \cite{kohatsu:taguchi:zhong}, one has
\begin{equation}
\label{regularized:param:series}
p^{\varepsilon,N}(0,t,x,y) = \sum_{r\geq0}( \tilde{p} \otimes H^{(r)}_{\varepsilon,N})(0,t,x,y)
\end{equation}

\noindent where $H_{\varepsilon,N}(s,t,x,y) = b^{\varepsilon}_N(x) \partial_x \tilde{p}(s,t,x,y) + \frac{(a(x)-a(y))}{2} \partial^{2}_x \tilde{p}(s,t,x,y)$. Moreover, the series is again absolutely and uniformly convergent $(0,T] \times \R \times \R^{*}$ and letting $\varepsilon$ goes to zero and then $N$ goes to infinity, from the dominated convergence theorem applied to the series \eqref{regularized:param:series}, one gets $p^{\varepsilon,N}(0,t,x,y) \rightarrow p(0,t,x,y)$ and $p^{\varepsilon,N}$ satisfies the Gaussian upper-bound:
$$
p^{\varepsilon,N}(0,t,x,y) \leq E_{\eta/2,1}(C(T^{(\frac{(3-2\eta)}{2}}|b^{\varepsilon}_N|_{\infty}+T^{1-\frac{\eta}{2}})) g_{c t}(y-x) \leq E_{\eta/2,1}(C(T^{(\frac{(3-2\eta)}{2}}|b|_{\infty}+T^{1-\frac{\eta}{2}})) g_{c t}(y-x).
$$

\noindent with constants $C(\lambda,\eta), c(\lambda,\eta)>1$ independent of $N$ and $\varepsilon$. By similar arguments as those employed in Chapter 1, \cite{friedman:64}, one derives from \eqref{regularized:param:series} that $t\mapsto p^{\varepsilon}_N(0,t,x,y)$ is continuously differentiable on $(0,T]$ for fixed $(x,y) \in \R \times \R^*$. Now, from Lemma \ref{Lipschitz:param:series:skew:diff} noting that $|b^{\varepsilon}_N|_{\infty} \leq |b|_{\infty}$, it follows that for all $t\in (0,T]$
\begin{equation}
\label{Lipschitz:bound:regularized:density}
 | \partial_t p^{\varepsilon,N}(0,t,x,y) | =  \lim_{h \rightarrow 0} | \frac{p^{\varepsilon,N}(0,t+h,x,y) - p^{\varepsilon,N}(0,t,x,y)}{h} | \leq C t^{-1} g_{c t}(y-x)
\end{equation}

\noindent where $C = C(\lambda,\eta)(|b|_{\infty}T^{\frac{1-\eta}{2}}+1) E_{\eta/2,1}(C(T^{(\frac{(3-2\eta)}{2}}|b|_{\infty}+T^{1-\frac{\eta}{2}}))$, $C(\lambda,\eta),\ c:=c(\lambda,\eta)>1$ do not depend on $N$ and $\varepsilon$. Moreover, still by dominated convergence theorem, letting $\varepsilon \rightarrow 0$ then $N\rightarrow +\infty$, one has
$$
A^{\varepsilon,N}_2 := \sum_{j+1 \leq k \leq (i+j)/2} \int_{t_k}^{t_{k+1}} \int_{\R} (p^{\varepsilon,N}(t_{j},s,x,z) - p^{\varepsilon,N}(t_j, t_k,,x,z))  H(s,t_{i},z,y)  dz ds \rightarrow A_2.
$$

 Now, by \eqref{Lipschitz:bound:regularized:density}, the semigroup property and the inequality $(t_i-t_{k})^{-(1-\frac{\eta}{2})} \leq C (t_i -t_{j})^{-(1-\frac{\eta}{2})}$ valid for $j+1 \leq k \leq (i+j)/2$, one gets for a constant $C$ that does not depend on $\varepsilon$
\begin{align*}
|A^{\varepsilon,N}_2| & \leq \frac{C}{(t_i-t_j)^{1-\frac{\eta}{2}}}  \left(\sum_{j+1 \leq k \leq (i+j)/2} \int_{t_k}^{t_{k+1}} \int_\R \int_{t_k}^{s} |\partial_t p^{\varepsilon,N}(0,t-t_j,x,z)| dt \right) g_{c (t_i-s)}(y-z) dz ds \\
& \leq  \frac{C}{(t_i-t_j)^{1-\frac{\eta}{2}}} \left(\sum_{j+1 \leq k \leq (i+j)/2} \int_{t_k}^{t_{k+1}} \log\left(\frac{s-t_j}{t_k-t_j}\right) ds \right)  g_{c (t_i-t_j)}(y-x) \\
& =  \frac{C h}{(t_i-t_j)^{1-\frac{\eta}{2}}}  \left(\sum_{j+1 \leq k \leq  i-1} \left\{ (k+1-j) \log\left( \frac{k+1-j}{k-j}\right) - 1 \right\} \right)   g_{c  (t_i-t_j)}(y-x) \\
& = \frac{C h}{(t_i-t_j)^{1-\frac{\eta}{2}}}  \left(\sum_{1 \leq k \leq  i-j-1} \left\{ (k+1) \log\left( \frac{k+1}{k}\right) - 1 \right\} \right)   g_{c (t_i-t_j)}(y-x) \\
& = \frac{C h}{(t_i-t_j)^{1-\frac{\eta}{2}}} g_{c (t_i-t_j)}(y-x)
\end{align*}
 
 \noindent where we used a second order Taylor expansion for the last but one equality. Hence, letting $\varepsilon \rightarrow 0$ and then $N\rightarrow \infty$ in the previous inequality yields
 $$
 |A_2| \leq  \frac{C h}{(t_i-t_j)^{1-\frac{\eta}{2}}}  \left(\sum_{1 \leq k \leq  i-j-1}  \left\{ (k+1) \log\left( \frac{k+1}{k}\right) - 1 \right\} \right)   g_{c (t_i-t_j)}(y-x).
 $$


We now consider \eqref{second:term:decomposition:pnd}. One has 
$$
|H(s,t_{i},z,y) - H(t_k,t_{i},z,y) | =  | \int_{t_k}^{s} \partial_t H(t,t_i,z,y) dt | \leq C \left( \int_{t_k}^{s} (t_i-t)^{-(2- \frac{\eta}{2})} dt \right)g_{c (t_i-t_k)} (y-z) 
$$

\noindent which, by the semigroup relation, clearly yields
\begin{align*}
\int_{t_k}^{t_{k+1}} \int_{\R}  p(t_j, t_{k}, x, z) (H(s,t_{i},z,y) - H(t_k,t_{i},z,y)) dz ds & \leq C \left( \int_{t_k}^{t_{k+1}} \left(\frac{1}{(t_i-s)^{1-\frac{\eta}{2}}} - \frac{1}{(t_i-t_k)^{1-\frac{\eta}{2}}} \right) ds \right) g_{c (t_i-t_j)} (y-x) \\
& = C \left( \frac{2}{\eta} \left((t_i-t_k)^{\frac{\eta}{2}} - (t_i-t_{k+1})^{\frac{\eta}{2}}\right) - \frac{h}{(t_i-t_k)^{1-\frac{\eta}{2}}} \right) g_{c (t_i-t_j)} (y-x).
\end{align*}

Again, by a second order Taylor expansion, one easily gets
\begin{align*}
\sum_{k=j}^{i-1} \frac{2}{\eta}\left((t_i-t_k)^{\frac{\eta}{2}} - (t_i-t_{k+1})^{\frac{\eta}{2}} \right)- \frac{h}{(t_i-t_k)^{1-\frac{\eta}{2}}}& = \sum_{k=1}^{i-j} \left\{ \frac{2}{\eta} (t_k^{\frac{\eta}{2}} - t_{k-1}^{\frac{\eta}{2}}) - \frac{h}{t^{1- \frac{\eta}{2}}_{k}}   \right\} \\
& = h^{\eta/2} \sum_{k=0}^{i-j-1} \left\{ \frac{2}{\eta} ( (k+1)^{\frac{\eta}{2}} - k^{\frac{\eta}{2}}) - \frac{1}{(k+1)^{1- \frac{\eta}{2}}}   \right\} \\
&  \leq C h^{\eta/2}
\end{align*}

\noindent which in turn yields
$$
| \sum_{k=j}^{i-1} \int_{t_k}^{t_{k+1}} \int_{\R}  p(t_j, t_{k}, x, z) (H(s,t_{i},z,y) - H(t_k,t_{i},z,y)) dz ds | \leq C h^{\eta/2} g_{c (t_i-t_j)} (y-x).
$$

Combining the two estimates we finally get
$$
\left| \left(p \otimes H - p \otimes_N H \right)(t_j,t_i,x,y) \right| \leq C(T,b,\sigma) h^{\frac{\eta}{2}} g_{c (t_i-t_j)}(y-x)
$$

\noindent where $T \mapsto C(T,b,\sigma)$ is a non-decreasing function. As a consequence of the previous computations, one also obtains
\begin{align*}
| \left(p \otimes H - p \otimes_N H \right)\otimes_N H(t_j,t_i,x,y) | & \leq C^2 h^{\eta/2} \left(\sum_{k=j+1}^{i-1} h  \frac{1}{(t_i-t_k)^{1- \frac{\eta}{2}}}\right) g_{c(t_i -t_j)}(y-x) \\
& \leq  C^2 (t_i-t_j)^{\eta/2}  h^{\eta/2} B(1, \frac{\eta}{2})  g_{c (t_i -t_j)}(y-x)
\end{align*}

\noindent and, by induction, for $r\geq 0$, one gets
\begin{align*}
| \left(p \otimes H - p \otimes_N H \right)\otimes_N H^{(r)}(t_j,t_i,x,y) | & \leq  C^{r+1}  h^{\eta/2} (t_i-t_j)^{ r \frac{\eta}{2}} \prod_{i=1}^{r}B\left(1 + (i-1)\frac{\eta}{2}, \frac{\eta}{2}\right)  g_{c (t_i -t_j)}(y-x).
\end{align*}

We plug the previous bound in \eqref{diff:p:and:pnd}. The asymptotic of the Gamma function readily yields the convergence of the series as well as a Gaussian upper-bound, namely, one gets
$$
| (p - p^{d}_N)(t_j,t_i,x,y)| \leq C(T,b,\sigma,\eta) h^{\eta/2} g_{c (t_i -t_j)}(y-x).
$$ 

Combining the previous bound with \eqref{pnd:pn:bound} completes the proof of Theorem \ref{weak:error:scheme}.

\section{Appendix: Proof of some technical results}\label{Appendix}

\begin{lem}\label{remainder:HminusHN} Under \A{HR} and \A{HE}, there exist a constant $c:=c(\lambda,\eta) \geq1$ such that for $(j, j') \in \left\{0,\cdots,N\right\}^2$ with $j+1 < j'$ and for all $(x,x') \in  (\R^{*})^2$, one has
\begin{equation*}
|\mathscr{R}_{N}(t_j,t_{j'},x,x')| \leq C \left(\frac{h^2}{(t_{j'}-t_j)^{2-\frac{\eta}{2}}} + (2\alpha-1) \frac{h^2}{(t_{j'}-t_j)^{\frac{3-\eta}{2}}} g_{ch}(x) \right) g_{c(t_{j'}-t_j)}(x'-x)
\end{equation*}

\noindent where $T\mapsto C=C(T,b, \sigma)$ is a positive non-decreasing function.
\end{lem}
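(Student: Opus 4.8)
The plan is to split $\mathscr{R}_{N}=\mathscr{R}_{N}^{(1)}+\mathscr{R}_{N}^{(2)}+\mathscr{R}_{N}^{(3)}$ according to the three lines of \eqref{RN:ito:tanaka}, $\mathscr{R}_{N}^{(1)}$ being the term carrying $b(x)\partial_x\tilde p$ and $\mathscr{R}_{N}^{(2)},\mathscr{R}_{N}^{(3)}$ the two terms carrying $\partial_x^2\tilde p$ (below we abbreviate $\partial_x^k\tilde p(Z)=\partial_x^k\tilde p(t_{j+1},t_{j'},Z,x')$). First I would collect, from the explicit Gaussian form of $\tilde p(t_{j+1},t_{j'},\cdot,x')=p^{x'}(0,t_{j'}-t_{j+1},\cdot,x')$, the pointwise bounds $|\partial_x^k\tilde p(z)|\le C(t_{j'}-t_{j+1})^{-k/2}g_{c(t_{j'}-t_{j+1})}(x'-z)$ for $z\in\R^*$ and $k=1,2,3,4$, together with the two structural facts that drive everything: the function $z\mapsto\partial_x^2\tilde p(z)$ belongs to $\mathcal{D}(\alpha)$ (it is continuous at $0$ and satisfies the transmission condition), whereas $z\mapsto\partial_x\tilde p(z)$ is only piecewise smooth, with a genuine jump at $z=0$ of size at most $C(2\alpha-1)(t_{j'}-t_{j+1})^{-1/2}g_{c(t_{j'}-t_{j+1})}(x')$. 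Since $j'>j+1$ one has $t_{j'}-t_{j+1}\ge\frac12(t_{j'}-t_j)$, so any factor $(t_{j'}-t_{j+1})^{-m}$ may be replaced by $C(t_{j'}-t_j)^{-m}$; moreover, as $t_{j'}-t_j\le T\le1$ after the scaling reduction, any leftover non-negative power of $t_{j'}-t_j$, as well as any factor $|b|_\infty$, may be absorbed into the non-decreasing function $C(T,b,\sigma)$.

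For $\mathscr{R}_{N}^{(2)}+\mathscr{R}_{N}^{(3)}$ I would apply the symmetric It\^o--Tanaka formula a \emph{second} time, now to the function $\partial_x^2\tilde p(\cdot)\in\mathcal{D}(\alpha)$, relative to the dynamics of $X^{N,t_j,x}$ (generator $b(x)\partial_x+\frac{a(x)}{2}\partial_x^2$ on $\mathcal{D}(\alpha)$) and of $\tilde X^{N,t_j,x}$ (generator $\frac{a(x')}{2}\partial_x^2$); this is licit precisely because $\partial_x^2\tilde p$ lies in the domain, and it expresses $\E[\partial_x^2\tilde p(X^N_s)]-\partial_x^2\tilde p(x)$ and its frozen analogue as $\int_0^s\E[(b(x)\partial_x^3\tilde p+\frac{a(x)}{2}\partial_x^4\tilde p)(\cdot_r)]\,dr$, all terms being evaluations of the \emph{bounded} functions $\partial_x^3\tilde p,\partial_x^4\tilde p$ at points $\neq0$. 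I would then use the decomposition
\begin{align*}
\mathscr{R}_{N}^{(2)}+\mathscr{R}_{N}^{(3)}&=\frac{a(x)}{2}\int_0^h\Big\{\E[\partial_x^2\tilde p(X^N_s)]-\E[\partial_x^2\tilde p(\tilde X^N_s)]\Big\}\,ds\\
&\quad+\frac{a(x)-a(x')}{2}\int_0^h\Big\{\E[\partial_x^2\tilde p(\tilde X^N_s)]-\partial_x^2\tilde p(x)\Big\}\,ds,
\end{align*}
bounding the second integral with $|a(x)-a(x')|\le L|x-x'|^\eta$ and the space--time inequality $|x-x'|^\eta g_{c(t_{j'}-t_j)}(x'-x)\le C(t_{j'}-t_j)^{\eta/2}g_{c(t_{j'}-t_j)}(x'-x)$ -- which is exactly what turns the time exponent $-2$ into $-2+\frac{\eta}{2}$ -- and the first integral by inserting \eqref{diff:p:tildep} against $\partial_x^2\tilde p$ and using \eqref{bound:transition:pN} and the semigroup property. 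As each bracket already carries a factor $\int_0^s dr$, the outer $\int_0^h ds$ then yields a bound of order $C(T,b,\sigma)h^2(t_{j'}-t_j)^{-2+\eta/2}g_{c(t_{j'}-t_j)}(x'-x)$, i.e. the first term of the statement.

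The delicate term is $\mathscr{R}_{N}^{(1)}=b(x)\int_0^h\{\E[\partial_x\tilde p(X^{N,t_j,x}_s)]-\partial_x\tilde p(x)\}\,ds$, and it is the main obstacle: because $\partial_x\tilde p$ is discontinuous at $0$, the It\^o--Tanaka formula cannot be applied once more. My plan is to compute $\E[\partial_x\tilde p(X^N_s)]-\partial_x\tilde p(x)$ \emph{by hand}, from the explicit density of the skew Brownian motion with constant drift and the explicit piecewise-Gaussian form of $\partial_x\tilde p$, distinguishing the eight configurations of the signs of $x$, $x+b(x)s$ and of the integration variable -- exactly as in the proof of \eqref{diff:p:tildep}. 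In each configuration the difference should split into (i) a part in which $\partial_x\tilde p$ is only seen within a single half-line, where it is smooth and can be treated like the bulk terms of the previous paragraph, contributing $|b(x)|$ times an $O(s)$ quantity; and (ii) the genuinely new contribution of the discontinuity, whose prefactor is $\propto(2\alpha-1)(t_{j'}-t_{j+1})^{-1/2}g_{c(t_{j'}-t_{j+1})}(x')$ and which is weighted by the expected local time of $X^{N}$ at the origin on $[0,s]$, namely $\E[L^0_s(X^{N,t_j,x})]\le C\int_0^s g_{cr}(x)\,dr\le C s\,g_{cs}(x)$ (by the elementary inequality $g_{cr}(x)\le\sqrt{s/r}\,g_{cs}(x)$ for $r\le s$). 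Integrating (ii) in $s$ over $[0,h]$ and using $\int_0^h s\,g_{cs}(x)\,ds\le Ch^2 g_{ch}(x)$ together with $g_{c(t_{j'}-t_{j+1})}(x')\,g_{ch}(x)\le C g_{c(t_{j'}-t_j)}(x'-x)\,g_{ch}(x)$ (valid for all $x$ after adjusting constants, since $h\le t_{j'}-t_j$) produces the second term $(2\alpha-1)C(T,b,\sigma)h^2(t_{j'}-t_j)^{-(3-\eta)/2}g_{ch}(x)g_{c(t_{j'}-t_j)}(x'-x)$. Summing the three contributions gives the claim. The real work is in part (ii): isolating precisely the jump of $\partial_x\tilde p$ inside the explicit Gaussian computation, and verifying that, because the skew Brownian motion with bounded drift accumulates only $O(s\,g_{cs}(x))$ local time at the origin over a time interval of length $s$, its contribution still carries the two extra powers of $h$ and the Gaussian localisation $g_{ch}(x)$ required by the statement.
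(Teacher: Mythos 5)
Your decomposition of $\mathscr{R}_N$ follows \eqref{RN:ito:tanaka}, but your treatment of the two second-derivative terms is genuinely different from the paper's: the paper Taylor-expands $\partial^2_x\tilde p$ around $x$, splits the integration region according to $|z-x|^2\lessgtr t_{j'}-t_{j+1}$ and channels the local time through the first moment $\E[X^N_s-x]$, whereas you apply the symmetric It\^o--Tanaka formula a second time to $z\mapsto\partial^2_x\tilde p(t_{j+1},t_{j'},z,x')$. Your structural claim is in fact correct (and contradicts the paper's own remark): the even-order $x$-derivatives of the frozen kernel are continuous at $0$ and satisfy the transmission condition, so $\partial^2_x\tilde p\in\mathcal{D}(\alpha)$ and the second It\^o--Tanaka application is licit; it even yields a bound for $\mathscr{R}^{2}_N+\mathscr{R}^{3}_N$ without any $g_{ch}(x)$ term, which is cleaner than the paper's. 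One slip to fix: for $\frac{a(x)}{2}\int_0^h\{\E[\partial^2_x\tilde p(X^{N}_s)]-\E[\partial^2_x\tilde p(\tilde X^{N}_s)]\}ds$, inserting \eqref{diff:p:tildep} against $\partial^2_x\tilde p$, as you literally write, only gives $Ch(t_{j'}-t_j)^{-1+\eta/2}$, which is short of the target by a factor $(t_{j'}-t_j)/h$; you must stay inside the It\^o--Tanaka representation and insert \eqref{diff:p:tildep} against $\partial^4_x\tilde p$ (together with $|a(x)-a(x')|\le L|x-x'|^{\eta}$ for the $\partial^4_x$ term and a direct bound on $b(x)\partial^3_x\tilde p$). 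Since you announce exactly that representation, this is a wording fix, not a gap.

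The genuine gap is in your part (ii) for $\mathscr{R}^{1}_N$: the assertion that the jump of $\partial_x\tilde p$ at the origin enters \emph{weighted by the expected local time} $\E[L^0_s(X^{N})]\le Cs\,g_{cs}(x)$. A jump of the \emph{derivative} of a test function couples to local time; a jump of the function being evaluated couples to the probability that $X^{N}_s$ and $x$ lie on opposite sides of the origin, which is of order $e^{-x^2/(Cs)}$, i.e.\ of order $\sqrt{s}\,g_{cs}(x)$, not $s\,g_{cs}(x)$. Concretely, for $x'>0$ and $\tau=t_{j'}-t_{j+1}$ one has $\partial_x\tilde p(0+)=2(1-\alpha)\frac{x'}{a(x')\tau}g_{a(x')\tau}(x')$ and $\partial_x\tilde p(0-)=2\alpha\frac{x'}{a(x')\tau}g_{a(x')\tau}(x')$, while the one-step law of $X^{N}$ started at $x\downarrow0^+$ puts mass $\approx\alpha$ on $(0,\infty)$ and $\approx1-\alpha$ on $(-\infty,0)$; hence, for each fixed $s\in(0,h]$, $\E[\partial_x\tilde p(t_{j+1},t_{j'},X^{N}_s,x')]-\partial_x\tilde p(t_{j+1},t_{j'},x,x')\longrightarrow(1-\alpha)\bigl(\partial_x\tilde p(0-)-\partial_x\tilde p(0+)\bigr)\neq0$ as $x\downarrow0$, with no factor $s\,g_{cs}(x)$ whatsoever. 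Carrying your outline through with the correct sign-crossing weight produces a contribution to $\mathscr{R}^{1}_N$ of size $(2\alpha-1)h^{3/2}(t_{j'}-t_j)^{-1/2}g_{ch}(x)\,g_{c(t_{j'}-t_j)}(x'-x)$, which exceeds the second term of the statement by the factor $h^{-1/2}(t_{j'}-t_j)^{1-\eta/2}$; so the announced bound does not follow from your computation. This is precisely the delicate point of the lemma: in the paper the local-time factor arises only through $\E[X^{N}_s-x]=b(x)s+(2\alpha-1)\E[L^0_s(X^{N})]$ after the Taylor expansion leading to \eqref{taylor:expansion:rn1}, and your plan neither reproduces that step nor provides an argument reducing the sign-crossing contribution of the jump to local-time size; as it stands, part (ii) is not a proof.
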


\begin{proof} From \eqref{RN:ito:tanaka}, we use the following decomposition
$$
\mathscr{R}_{N}(t_j,t_{j'},x,x') = \mathscr{R}^1_{N}(t_j,t_{j'},x,x') + \mathscr{R}^2_{N}(t_j,t_{j'},x,x') + \mathscr{R}^3_{N}(t_j,t_{j'},x,x')
$$

\noindent with
\begin{align*}
 \mathscr{R}^1_{N}(t_j,t_{j'},x,x') & :=  b(x) \int_0^h \left\{ \E[\partial_x\tilde{p}(t_{j+1},t_{j'},X^{N,t_{j},x}_{s},x')] - \partial_x\tilde{p}(t_{j+1},t_{j'},x,x') \right\} ds, \\
 \mathscr{R}^2_{N}(t_j,t_{j'},x,x') & := \frac12 (a(x)-a(x')) \int_0^h \left\{ \E[\partial^2_x\tilde{p}(t_{j+1},t_{j'},X^{N,t_{j},x}_{s},x')] - \partial^2_x\tilde{p}(t_{j+1},t_{j'},x,x') \right\} ds,  \\
 \mathscr{R}^3_{N}(t_j,t_{j'},x,x') & := - \frac12 a(x') \int_0^h \left\{ \E[\partial^2_x\tilde{p}(t_{j+1},t_{j'},\tilde{X}^{N,t_{j},x}_{s},x')] - \E[\partial^2_x\tilde{p}(t_{j+1},t_{j'},X^{N,t_{j},x}_{s},x')] \right\} ds .
\end{align*}

The main difficulty lies in the presence of the local time part in the dynamics of $X^{N}$ and $\tilde{X}^N$. More precisely, we remark that if $\alpha \neq 1/2$, $z\mapsto \partial_x\tilde{p}(t_{j+1},t_{j'},z,x'), \ \partial^2_x\tilde{p}(t_{j+1},t_{j'},z,x')$ are not continuous and (consequently) not differentiable at $0$ so we cannot apply the It formula. We instead rely on Taylor's expansion. For the first term $\mathscr{R}^1_{N}(t_j,t_{j'},x,x')$ we use the decomposition
\begin{align*}
\E[\partial_x\tilde{p}(t_{j+1},t_{j'},X^{N,t_{j},x}_{s},x')] & - \partial_x\tilde{p}(t_{j+1},t_{j'},x,x')   = \int_{\R} p_N(0,s,x,z) (\partial_x\tilde{p}(t_{j+1},t_{j'},z,x') - \partial_x\tilde{p}(t_{j+1},t_{j'},x,x')) dz \\
& = \int_{\R  \backslash \left\{z: |z-x|^2 < (t_{j'}-t_{j+1})\right\}} p_N(0,s,x,z) (\partial_x\tilde{p}(t_{j+1},t_{j'},z,x') - \partial_x\tilde{p}(t_{j+1},t_{j'},x,x') \\
& \quad - \partial^2_x\tilde{p}(t_{j+1},t_{j'},x,x') (z-x)) dz \\
& \quad + \int_{\left\{z: |z-x|^2 < (t_{j'}-t_{j+1})\right\}} p_N(0,s,x,z) (\partial_x\tilde{p}(t_{j+1},t_{j'},z,x') - \partial_x\tilde{p}(t_{j+1},t_{j'},x,x') \\
& \quad - \partial^2_x\tilde{p}(t_{j+1},t_{j'},x,x') (z-x)) dz  \\
& \quad + \left( \int_{\R} p_N(0,s,x,z)   (z-x) dz\right) \partial^2_x\tilde{p}(t_{j+1},t_{j'},x,x').
\end{align*}

 If $|z-x|^2 \geq t_{j'}-t_{j+1}$,  one has $|\partial_x \tilde{p}(t_{j+1},t_{j'},z,x')|\leq C (t_{j'}-t_{j+1})^{-1/2} g_{c (t_{j'}-t_{j+1}}(x'-z) \leq C |z-x|^2 (t_{j'}-t_{j+1})^{-3/2} g_{c (t_{j'}-t_{j+1})}(x'-z)$ and similarly one gets $|\partial_x \tilde{p}(t_{j+1},t_{j'},x, x')| \leq C  |z-x|^2 (t_{j'}-t_{j+1})^{-3/2} g_{c (t_{j'}-t_{j+1})}(x'-x)$ and $|\partial^2_x \tilde{p}(t_{j+1},t_{j'},x, x')| \leq C  |z-x| (t_{j'}-t_{j+1})^{-3/2} g_{c (t_{j'}-t_{j+1})}(x'-x)$ for some positive constant $C:=C(\lambda)>1$. Combining the latter bounds with \eqref{bound:transition:pN} yields
\begin{align}
|\int_{\R  \backslash \left\{z: |z-x|^2 < (t_{j'}-t_{j+1})\right\}} p_N(0,s,x,z)&  (\partial_x\tilde{p}(t_{j+1},t_{j'},z,x') - \partial_x\tilde{p}(t_{j+1},t_{j'},x,x')  - \partial^2_x\tilde{p}(t_{j+1},t_{j'},x,x') (z-x)) dz | \nonumber \\
& \leq C (t_{j'}-t_{j+1})^{-\frac32} \int_{\R} g_{c s}(z-x) |z-x|^2 (g_{c (t_{j'}-t_{j+1})}(x'-x) + g_{c (t_{j'}-t_{j+1})}(z-x')) dz \nonumber \\
& \leq C \frac{s}{(t_{j'}-t_{j+1})^{\frac32}}  \left\{g_{c (t_{j'}-t_{j+1})}(x'-x)  + g_{c (t_{j'}-t_{j+1}+s)}(x'-x)  \right\} \nonumber \\
& \leq C \frac{s}{(t_{j'}-t_{j})^{\frac32}} g_{c (t_{j'}-t_{j})}(x'-x) \label{case1}
\end{align}

\noindent where $T \mapsto C:=C(T,b,\sigma)$ is a positive non-decreasing function and where we used that $j'>j+1$ for the last inequality.

 If $|z-x|^2 < (t_{j'}-t_{j+1})$, we remark that $ \mathscr{D} = \left\{ z: z\mapsto \partial_x\tilde{p}(t_{j+1},t_{j'},z,x') \mbox{ is not twice differentiable}  \right\}=\left\{ 0\right\}$, so that by Taylor's expansion, one gets
\begin{align}
\int_{\left\{z: |z-x|^2 < (t_{j'}-t_{j+1})\right\}} p_N(0,s,x,z) & \left( \partial_x \tilde{p}(t_{j+1},t_{j'},z,x') - \partial_x \tilde{p}(t_{j+1},t_{j'},x, x') -  \partial^2_x \tilde{p}(t_{j+1},t_{j'},x, x') (z-x) \right) dz  \nonumber \\
& =  \int_{\left\{z: |z-x|^2 < (t_{j'}-t_{j+1})\right\}} p_N(0,s,x,z) \frac12 \partial^3_x \tilde{p}(t_{j+1},t_{j'},\zeta, x') (z-x)^2 dz \nonumber
\end{align}

\noindent where $\zeta$ is a point in $(z,x)$. Observe that since $|z-x|^2 < (t_{j'}-t_{j+1})$ for any point $\zeta \in (z,x)$ one has
$$
\exp\left(- \frac{(\zeta-x')^2}{c(t_{j'}-t_{j+1})}\right) \leq C \left( \exp\left(-\frac{(z-x')^2}{c(t_{j'}-t_{j+1})}\right) + \exp\left(-\frac{(x-x')^2}{c(t_{j'}-t_{j+1})}\right) \right).
$$

\noindent Combining the previous inequality with \eqref{bound:transition:pN} and the semigroup property we obtain
\begin{align}
 | \int_{\left\{z: |z-x|^2 < (t_{j'}-t_{j+1})\right\}} p_N(0,s,x,z) \frac12 \partial^3_x \tilde{p}(t_{j+1},t_{j'},\zeta, x') (z-x)^2 dz | & \leq C  \frac{s}{(t_{j'}-t_{j})^{\frac32}}  g_{c (t_{j'}-t_{j})}(x'-x) \label{taylor:expansion:rn1}
\end{align}

\noindent where $T \mapsto C:=C(T,b,\sigma)$ is a positive non-decreasing function and where we again used  that $j'>j+1$.
We now consider the quantity $\left( \int_{\R} p_N(0,s,x,z)   (z-x) dz\right) \partial^2_x\tilde{p}(t_{j+1},t_{j'},x,x') = (\E[X^{N}_s-x]) \partial^2_x\tilde{p}(t_{j+1},t_{j'},x,x')$. Using the dynamics \eqref{Euler:scheme}, we get $\E[X^{N}_s-x] = b(x) s + (2\alpha-1) \E[L^0_s(X^N)]$. By dominated convergence theorem and the gaussian upper-estimate \eqref{bound:transition:pN} satisfied by $p_N$, one has 
\begin{align*}
\E[L^0_s(X^N)] & = \E[\lim_{\eta \rightarrow 0} \frac{1}{2\eta}\int_0^s \mathbb{I}_{\left\{-\eta \leq X_u \leq \eta\right\}} a(x)du] \\
& = a(x) \lim_{\eta \rightarrow 0} \int_0^s \frac{\P(X_u\leq \eta)-\P(X_u\leq -\eta)}{2 \eta} du \\
& = a(x) \int_0^s \frac{p_{N}(0,u,x,0+) + p_{N}(0,u,x,0-)}{2} du. \\
& \leq C \int_0^s g_{cu}(x) du \\
& \leq C s g_{cs}(x)
\end{align*}

\noindent which in turn yields
\begin{align}
\label{3term:rn1}
|\left( \int_{\R} p_N(0,s,x,z)   (z-x) dz\right) \partial^2_x\tilde{p}(t_{j+1},t_{j'},x,x') | & \leq C \frac{s}{(t_{j'}-t_j)} (1+ (2\alpha-1)g_{cs}(x)) g_{c(t_{j'}-t_{j})}(x'-x)
\end{align}

\noindent where $T \mapsto C:=C(T,b,\sigma)$ is a non-decreasing positive function.
Combining \eqref{case1}, \eqref{taylor:expansion:rn1} and \eqref{3term:rn1} we obtain
\begin{align}
|\mathscr{R}^1_{N}(t_j,t_{j'},x,x') |&  \leq C \int_0^h \left(\frac{s}{(t_{j'}-t_{j})^{\frac32}} + (2\alpha-1)\frac{s }{(t_{j'}-t_{j})}g_{cs}(x)\right)  g_{c(t_{j'}-t_{j})}(x'-x) \nonumber \\
& \leq C  \left(\frac{h^2}{(t_{j'}-t_{j})^{\frac32}} + (2\alpha-1)\frac{h^2}{(t_{j'}-t_{j})}g_{ch}(x)\right)  g_{c(t_{j'}-t_{j})}(x'-x). \label{bound:rn1}
\end{align}

We now focus on the second remainder term $ \mathscr{R}^2_{N}(t_j,t_{j'},x,x')$. Similarly to the previous case, we use the decomposition
\begin{align*}
\E[\partial^2_x\tilde{p}(t_{j+1},t_{j'},X^{N,t_{j},x}_{s},x')] & - \partial^2_x\tilde{p}(t_{j+1},t_{j'},x,x')   = \int_{\R  \backslash \left\{z: |z-x|^2 < (t_{j'}-t_{j+1})\right\}} p_N(0,s,x,z) (\partial^2_x\tilde{p}(t_{j+1},t_{j'},z,x') - \partial^2_x\tilde{p}(t_{j+1},t_{j'},x,x') \\
& \quad - \partial^3_x\tilde{p}(t_{j+1},t_{j'},x,x') (z-x)) dz \\
& \quad + \int_{\left\{z: |z-x|^2 < (t_{j'}-t_{j+1})\right\}} p_N(0,s,x,z) (\partial^2_x\tilde{p}(t_{j+1},t_{j'},z,x') - \partial^2_x\tilde{p}(t_{j+1},t_{j'},x,x') \\
& \quad - \partial^3_x\tilde{p}(t_{j+1},t_{j'},x,x') (z-x)) dz  \\
& \quad + \left( \int_{\R} p_N(0,s,x,z)   (z-x) dz\right) \partial^3_x\tilde{p}(t_{j+1},t_{j'},x,x').
\end{align*}

Following similar lines of reasoning as for the first case, one successively gets
\begin{align}
| \int_{\R  \backslash \left\{z: |z-x|^2 < (t_{j'}-t_{j+1})\right\}} p_N(0,s,x,z) &  (\partial^2_x\tilde{p}(t_{j+1},t_{j'},z,x') - \partial^2_x\tilde{p}(t_{j+1},t_{j'},x,x') - \partial^3_x\tilde{p}(t_{j+1},t_{j'},x,x') (z-x)) dz | \nonumber \\
& \leq C \frac{s}{(t_{j'}-t_{j})^2} g_{c(t_{j'}-t_{j})}(x'-x) \label{bound1:rn2}
\end{align}

\noindent and, for some $\zeta \in (z,x)$,
\begin{align}
 | \int_{\left\{z: |z-x|^2 < (t_{j'}-t_{j+1})\right\}} p_N(0,s,x,z) & (\partial^2_x\tilde{p}(t_{j+1},t_{j'},z,x') - \partial^2_x\tilde{p}(t_{j+1},t_{j'},x,x') - \partial^3_x\tilde{p}(t_{j+1},t_{j'},x,x') (z-x)) dz | \nonumber \\
 & = |  \int_{\left\{z: |z-x|^2 < (t_{j'}-t_{j+1})\right\}} p_N(0,s,x,z) \frac12 \partial^4_x\tilde{p}(t_{j+1},t_{j'},\zeta,x') (z-x)^2 dz | \nonumber \\
 & \leq C \frac{s}{(t_{j'}-t_{j})^2} g_{c(t_{j'}-t_{j})}(x'-x)\label{bound2:rn2}
\end{align}

\noindent where $T \mapsto C:=C(T,b,\sigma)$ is a non-decreasing positive function. Finally, one also obtains
\begin{align}
|\left( \int_{\R} p_N(0,s,x,z)   (z-x) dz\right) \partial^3_x\tilde{p}(t_{j+1},t_{j'},x,x')| & \leq C \frac{s}{(t_{j'}-t_{j})^{\frac32}} (1+ (2\alpha-1)g_{cs}(x)) g_{c(t_{j'}-t_j)}(x'-x).\label{bound3:rn2}
\end{align}

Combining \eqref{bound1:rn2}, \eqref{bound2:rn2}, \eqref{bound3:rn2} with \A{HR} yields
\begin{align}
|\mathscr{R}^2_{N}(t_j,t_{j'},x,x') |& \leq C |x-x'|^\eta  \left( \int_0^h \left(\frac{s}{(t_{j'}-t_j)^2} + (2\alpha-1) \frac{s}{(t_{j'}-t_j)^{\frac32}}  g_{cs}(x)\right) ds \right) g_{c(t_{j'}-t_j)}(x'-x) \nonumber \\
& \leq C \left(\frac{h^2}{(t_{j'}-t_j)^{2-\frac{\eta}{2}}} + (2\alpha-1) \frac{h^2}{(t_{j'}-t_j)^{\frac{3-\eta}{2}}}  g_{ch}(x)\right) g_{c(t_{j'}-t_j)}(x'-x). \label{bound:rn2}
\end{align}

We now conclude by the third remainder term $ \mathscr{R}^3_{N}(t_j,t_{j'},x,x') $. We first remark that 
\begin{align*}
\E[\partial^2_x\tilde{p}(t_{j+1},t_{j'},\tilde{X}^{N,t_{j},x}_{s},x')]  - \E[\partial^2_x\tilde{p}(t_{j+1},t_{j'},X^{N,t_{j},x}_{s},x')]  & = \int_{\R} (\tilde{p}^{t_{j'},x'}_N(t_j,t_j+s,x,z)-p_N(0,s,x,z) ) \partial^2_x\tilde{p}(t_{j+1},t_{j'},z,x') dz \\
& =  \int_{\R} ( p^{x'}(0,s,x,z) - p_N(0,s,x,z) ) \partial^2_x\tilde{p}(t_{j+1},t_{j'},z,x') dz 
\end{align*}

\noindent and, similarly to the previous terms, we use the following decomposition
\begin{align}
\int_{\R} & ( p^{x'}(0,s,x,z) - p_N(0,s,x,z) )  \partial^2_x\tilde{p}(t_{j+1},t_{j'},z,x') dz = \left(\int_{\R} ( p^{x'}(0,s,x,z) - p_N(0,s,x,z) ) (z-x) dz\right) \partial^3_x\tilde{p}(t_{j+1},t_{j'},x,x')   \nonumber\\
&  + \int_{\left\{z: |z-x|^2 < (t_{j'}-t_{j+1})\right\}} ( p^{x'}(0,s,x,z) - p_N(0,s,x,z) ) (\partial^2_x\tilde{p}(t_{j+1},t_{j'},z,x') - \partial^2_x\tilde{p}(t_{j+1},t_{j'},x,x') - \partial^3_x\tilde{p}(t_{j+1},t_{j'},x,x')(z-x)) \nonumber \\
&  + \int_{\R \backslash \left\{z: |z-x|^2 < (t_{j'}-t_{j+1})\right\}} ( p^{x'}(0,s,x,z) - p_N(0,s,x,z) ) (\partial^2_x\tilde{p}(t_{j+1},t_{j'},z,x') - \partial^2_x\tilde{p}(t_{j+1},t_{j'},x,x') - \partial^3_x\tilde{p}(t_{j+1},t_{j'},x,x')(z-x)). \label{decomposition:rn3}
\end{align}

From \eqref{diff:p:tildep} and the mean value theorem, for some $\zeta \in (z,x)$ one gets
\begin{align}
\int_{\left\{z: |z-x|^2 < (t_{j'}-t_{j+1})\right\}} ( p^{x'}(0,s,x,z) - p_N(0,s,x,z) ) &  (\partial^2_x\tilde{p}(t_{j+1},t_{j'},z,x') - \partial^2_x\tilde{p}(t_{j+1},t_{j'},x,x') - \partial^3_x\tilde{p}(t_{j+1},t_{j'},x,x')(z-x)) \nonumber \\
& \leq C (|b|_{\infty} s^{\frac12} + |x-x'|^{\eta}) \int_{\left\{z: |z-x|^2 < (t_{j'}-t_{j+1})\right\}} g_{cs}(z-x) |z-x|^2 \nonumber \\
& \quad \times |\partial^4_x\tilde{p}(t_{j+1},t_{j'},\zeta,x')| dz \nonumber \\
& \leq C \frac{s}{(t_{j'}-t_j)^{2-\frac{\eta}{2}}} g_{c(t_{j'}-t_j)}(x'-x) \label{second:term:rn3}
\end{align}

\noindent and similarly to \eqref{case1}
\begin{align}
\int_{\R \backslash \left\{z: |z-x|^2 < (t_{j'}-t_{j+1})\right\}} ( p^{x'}(0,s,x,z) - p_N(0,s,x,z) ) & (\partial^2_x\tilde{p}(t_{j+1},t_{j'},z,x') - \partial^2_x\tilde{p}(t_{j+1},t_{j'},x,x') - \partial^3_x\tilde{p}(t_{j+1},t_{j'},x,x')(z-x)) \nonumber \\
& \leq C \frac{s}{(t_{j'}-t_j)^{2-\frac{\eta}{2}}} g_{c(t_{j'}-t_j)}(x'-x) \label{third:term:rn3}
\end{align}

\noindent where $T \mapsto C:=C(T,b,\sigma)$ is a non-decreasing positive function.
For the first term appearing in the right-hand side of \eqref{decomposition:rn3}, we write $\int_{\R} ( p^{x'}(0,s,x,z) - p_N(0,s,x,z) ) (z-x) dz = \E[X^{x'}_s-x] - \E[X^{N}_s-x] = (2\alpha-1)\E[L^{0}_s(X^{x'})] - b(x) s - (2\alpha-1) \E[L^{0}_s(X^{N})]$. By dominated convergence theorem, one gets 
\begin{align*}
\E[L^{0}_s(X^{x'})] - \E[L^{0}_s(X^{N})] & = a(x') \int_0^s \frac{p^{x'}(0,u,x,0+)+p^{x'}(0,u,x,0-)}{2} du - a(x) \int_0^s \frac{p_N(0,u,x,0+)+p_N(0,u,x,0-)}{2}du \\
& = a(x') \int_0^s g_{a(x')u}(x) du - a(x) \int_0^s g_{a(x)u}(x+b(x)u) du \\
& = (a(x')-a(x)) \int_0^s g_{a(x')u}(x) du  + a(x) \int_0^s (g_{a(x')u}(x)   - g_{a(x)u}(x+b(x)u)) du 
\end{align*}

\noindent where we used the exact expression of the transition densities of $(X^{x'}_s)_{s\in [0,h]}$ and $(X^{N,0,x}_{s})_{s\in [0,h]}$ for the last but one equality. From \A{HR} and \A{HE}, one has $|g_{a(x')u}(x) - g_{a(x)u}(x+b(x)u)| \leq C (|b|_{\infty} u^{\frac12}+  |x'-x|^{\eta}) g_{cu}(x)$ which in turn yields
$$
|\E[L^{0}_s(X^{x'})] - \E[L^{0}_s(X^{N})]| \leq C(|b|_{\infty}s^{\frac12} + |x'-x|^{\eta}) s g_{cs}(x).
$$

From the previous computations, we conclude that 
\begin{align}
|\left(\int_{\R} ( p^{x'}(0,s,x,z) - p_N(0,s,x,z) ) (z-x) dz\right) \partial^3_x\tilde{p}(t_{j+1},t_{j'},x,x')| & \leq \frac{C}{(t_{j'}-t_j)^{\frac32}} (|b|_{\infty} s + (2\alpha-1) |x'-x|^{\eta} s g_{cs}(x))  g_{c(t_{j'}-t_j)}(x'-x) \nonumber \\
& \leq C (\frac{s}{(t_{j'}-t_j)^{2-\frac{\eta}{2}}} + (2\alpha-1)\frac{sg_{cs}(x)}{(t_{j'}-t_j)^{\frac{3-\eta}{2}}})g_{c(t_{j'}-t_j)}(x'-x)\label{first:term:rn3}
\end{align}

\noindent where $T \mapsto C:=C(T,b,\sigma)$ is a non-decreasing positive function.

Combining \eqref{second:term:rn3}, \eqref{third:term:rn3} and \eqref{first:term:rn3}, we obtain
\begin{align}
|\mathscr{R}^3_{N}(t_j,t_{j'},x,x') |& \leq C  \left( \int_0^h \left( \frac{s}{(t_{j'}-t_j)^{2-\frac{\eta}{2}}} + (2\alpha-1) \frac{s}{(t_{j'}-t_j)^{\frac{3-\eta}{2}}} g_{cs}(x)\right) ds \right) g_{c(t_{j'}-t_j)}(x'-x) \nonumber \\
& \leq  C\left( \frac{h^2}{(t_{j'}-t_j)^{2-\frac{\eta}{2}}} + (2\alpha-1) \frac{h^2}{(t_{j'}-t_j)^{\frac{3-\eta}{2}}} g_{ch}(x)\right)  g_{c(t_{j'}-t_j)}(x'-x)\label{bound:rn3}
\end{align}

\noindent where $T \mapsto C:=C(T,b,\sigma)$ is a non-decreasing positive function. This last bound completes the proof.
\end{proof}

\begin{lem}\label{Lipschitz:param:series:skew:diff}Under \A{HR} and \A{HE}, there exist constants $C(\lambda,\eta),\, c:=c(\lambda,\eta) \geq1$ such that for all $t\in ]0,T]$, for all $r \geq 0$, for all $(x,x') \in \R \times \R^{*}$, one has
\begin{equation}
\label{bound:parametrix:term:diff}
 \forall s \in [0,t], \ |\tilde{p} \otimes H^{(r)}(0,t+s,x,x') - \tilde{p} \otimes H^{(r)}(0,t,x,x') | \leq  \frac{s}{t} C^{r+1} t^{r \frac{\eta}{2}} \prod_{i=1}^{r-1} B\left(1+\frac{(i-1)\eta}{2},\frac{\eta}{2}\right) g_{c t}(x'-x)
\end{equation}

\noindent with $C:= C(\lambda,\eta)(|b|_{\infty}T^{\frac{1-\eta}{2}}+1)$ and where we use the convention $\prod_{\emptyset} =1$. Consequently, for all $ 0 <s \leq t \leq T$, one has
\begin{equation*}
\forall (x,x')  \times \R \times \R^{*} , \ | p(0,t+s,x,x') - p(0,t,x,x') | \leq C E_{\eta/2,1}(C(|b|_{\infty} T^{\frac12}+T^{\frac{\eta}{2}})) \frac{s}{t} g_{c t}(x'-x).
\end{equation*}

\end{lem}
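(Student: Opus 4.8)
The plan is to prove the term‑by‑term bound \eqref{bound:parametrix:term:diff} by induction on $r$ and then to sum it over $r$ in order to get the stated estimate for $p(0,t+s,x,x')-p(0,t,x,x')$, exactly as the Gaussian upper bound \eqref{Gaussian:upperbound:skewdiff} was deduced from \eqref{iter:step:param}. For the base case $r=0$ I would work directly with the explicit expression of $\tilde p(0,t,x,x')=p^{x'}(0,t,x,x')$: it is a fixed linear combination of the Gaussian kernels $g_{a(x')t}(x'-x)$ and $g_{a(x')t}(x'+x)$, hence depends on $t$ only through the variance $a(x')t$, and a one‑line differentiation gives $|\partial_t\tilde p(0,t,x,x')|\le C\,t^{-1}g_{ct}(x'-x)$, the polynomial factor produced by the time derivative of a Gaussian being absorbed into a slightly larger variance through the space–time inequality. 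Integrating this over $[t,t+s]$ and using $g_{cu}(x'-x)\le C g_{ct}(x'-x)$ for $t\le u\le 2t$ (valid since $s\le t$) yields $|\tilde p(0,t+s,x,x')-\tilde p(0,t,x,x')|\le C\,\tfrac{s}{t}\,g_{ct}(x'-x)$, i.e. \eqref{bound:parametrix:term:diff} at rank $0$, with a constant not involving $b$.

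For the inductive step I assume \eqref{bound:parametrix:term:diff} at rank $r-1$ and also use the Gaussian control \eqref{iter:step:param} for $\tilde p\otimes H^{(r-1)}$. The structural fact I would exploit is that, because $\tilde p^{y}(s,t,\cdot,y)$ is a combination of Gaussians of variance $a(y)(t-s)$, the kernel $H(s,t,z,y)=b(z)\partial_z\tilde p(s,t,z,y)+\tfrac{a(z)-a(y)}{2}\partial^2_z\tilde p(s,t,z,y)$ depends on $(s,t)$ only through the lag $\theta=t-s$; writing $H(s,t,z,y)=\bar H(t-s,z,y)$, the same computation that gives \eqref{kernel:bound}, but with one extra time differentiation and the usual splitting into the four sign configurations of $(z,y)$, yields $|\bar H(\theta,z,y)|\le C\theta^{-1+\eta/2}g_{c\theta}(y-z)$ and $|\partial_\theta\bar H(\theta,z,y)|\le C\theta^{-2+\eta/2}g_{c\theta}(y-z)$. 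I then rescale the inner integration variable: with $\mu:=(t+s)/t=1+\tfrac{s}{t}\in(1,2]$, the change of variable $u\mapsto\mu u$ gives
\[
\tilde p\otimes H^{(r)}(0,t+s,x,x')=\mu\int_0^t\!\!\int_{\R}\bigl(\tilde p\otimes H^{(r-1)}\bigr)(0,\mu u,x,z)\,\bar H\bigl(\mu(t-u),z,x'\bigr)\,dz\,du,
\]
which is now an integral over the same time interval $(0,t)$ as $\tilde p\otimes H^{(r)}(0,t,x,x')$. Subtracting the two, the integrand of the difference splits into three pieces: (i) $\mu(\tilde p\otimes H^{(r-1)})(0,\mu u,x,z)\bigl[\bar H(\mu(t-u),z,x')-\bar H(t-u,z,x')\bigr]$; (ii) $\mu\bigl[(\tilde p\otimes H^{(r-1)})(0,\mu u,x,z)-(\tilde p\otimes H^{(r-1)})(0,u,x,z)\bigr]\bar H(t-u,z,x')$; and (iii) $(\mu-1)(\tilde p\otimes H^{(r-1)})(0,u,x,z)\bar H(t-u,z,x')$. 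In (i) the mean value theorem and the bound on $\partial_\theta\bar H$ give $|\bar H(\mu(t-u),z,x')-\bar H(t-u,z,x')|\le C\tfrac{s}{t}(t-u)^{-1+\eta/2}g_{c(t-u)}(x'-z)$, since the lag ranges over an interval of length $(\mu-1)(t-u)=\tfrac{s}{t}(t-u)$ on which $\theta\asymp t-u$; in (ii) the time increment is $\mu u-u=\tfrac{s}{t}u\le u$, so the induction hypothesis at rank $r-1$ applies (with $\tfrac{s}{t}u\le u$ playing the role of $s\le t$) and produces the factor $\tfrac{s}{t}$ at once; in (iii) the factor is $\mu-1=\tfrac{s}{t}$. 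In each case one then bounds $\tilde p\otimes H^{(r-1)}$ by \eqref{iter:step:param}, performs the Gaussian convolution in $z$ and the Beta integral $\int_0^t u^{(r-1)\eta/2}(t-u)^{-1+\eta/2}du=t^{r\eta/2}B\bigl(1+\tfrac{(r-1)\eta}{2},\tfrac{\eta}{2}\bigr)$, so that each piece is bounded by a constant times $\tfrac{s}{t}\,C^{r+1}t^{r\eta/2}\prod_{i=1}^{r-1}B\bigl(1+\tfrac{(i-1)\eta}{2},\tfrac{\eta}{2}\bigr)g_{ct}(x'-x)$; the finitely many uniformly bounded numerical factors created on the way ($\mu\le2$, $2^{(r-1)\eta/2}$ from $(\mu u)^{(r-1)\eta/2}$, $B(1+\tfrac{(r-1)\eta}{2},\tfrac{\eta}{2})\le B(1,\tfrac{\eta}{2})=\tfrac{2}{\eta}$) are absorbed into $C$ since it enters \eqref{bound:parametrix:term:diff} through $C^{r+1}$, which closes the induction.

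I expect the main obstacle to be precisely the endpoint singularity of the kernel $H$. Indeed, the more naive move of splitting the time integral in $\tilde p\otimes H^{(r)}(0,t+s,x,x')$ at $u=t$ leaves a ``border'' term $\int_t^{t+s}\!\int_{\R}(\tilde p\otimes H^{(r-1)})(0,u,x,z)\,H(u,t+s,z,x')\,dz\,du$ which, because $\int_t^{t+s}(t+s-u)^{-1+\eta/2}du=\tfrac{2}{\eta}s^{\eta/2}$, is only of order $s^{\eta/2}$ and hence far too large for the target rate $s/t$ when $s\ll t$. The rescaling $u\mapsto\mu u$ above is exactly what circumvents this: it replaces the endpoint singularity by a relative perturbation of size $\mu-1=s/t$ that the induction can absorb. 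The only other genuinely technical point is the pointwise estimate on $\partial_\theta\bar H$; this is obtained by the same cumbersome but routine case analysis already carried out for \eqref{kernel:bound}, and it is needed only for lags up to $2t\le2T$.

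Finally, for the statement about $p$ I would sum \eqref{bound:parametrix:term:diff} over $r\ge0$. Using $\prod_{i=1}^{r-1}B\bigl(1+\tfrac{(i-1)\eta}{2},\tfrac{\eta}{2}\bigr)=\Gamma(\tfrac{\eta}{2})^{r-1}/\Gamma\bigl(1+\tfrac{(r-1)\eta}{2}\bigr)$, the series $\sum_{r\ge0}C^{r+1}t^{r\eta/2}\prod_{i=1}^{r-1}B(\cdots)$ is dominated by $C\,E_{\eta/2,1}\bigl(C\Gamma(\tfrac{\eta}{2})t^{\eta/2}\bigr)$, and since $t\le T$ and, by \eqref{kernel:bound}, $C=C(\lambda,\eta)(|b|_\infty T^{(1-\eta)/2}+1)$, one has $C\Gamma(\tfrac{\eta}{2})t^{\eta/2}\le C(\lambda,\eta)(|b|_\infty T^{1/2}+T^{\eta/2})$ up to a multiplicative constant. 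This gives
\[
|p(0,t+s,x,x')-p(0,t,x,x')|\le\sum_{r\ge0}\bigl|\tilde p\otimes H^{(r)}(0,t+s,x,x')-\tilde p\otimes H^{(r)}(0,t,x,x')\bigr|\le C\,E_{\eta/2,1}\bigl(C(|b|_\infty T^{1/2}+T^{\eta/2})\bigr)\,\tfrac{s}{t}\,g_{ct}(x'-x),
\]
which is the announced bound.
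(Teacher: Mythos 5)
Your proposal is correct, and the inductive step takes a genuinely different route from the paper. The paper exploits time-homogeneity by flipping the convolution variable (so both terms are written against $H(0,u,z,x')$), which produces a boundary strip $\int_t^{t+s}$ carrying no endpoint singularity plus an increment term over $[0,t]$; the latter is then split at $t/2$, with a sub-case analysis ($s\le t/2$ versus $s> t/2$) on the piece where the induction hypothesis applies, and a further three-term re-decomposition of the piece over $[t/2,t]$ that uses the bound $|\partial_v H(0,v,z,x')|\le C v^{-(2-\eta/2)}g_{cv}(x'-z)$. Your homothety $u\mapsto \mu u$, $\mu=1+s/t$, instead aligns both convolutions over the same interval $[0,t]$ and telescopes into three pieces (kernel increment via $\partial_\theta \bar H$, lower-order increment via the induction hypothesis applied with base time $u$ and increment $su/t\le u$, and the Jacobian factor $\mu-1=s/t$), thereby avoiding both the boundary term and the case analysis; the price is bookkeeping of the rescaled arguments ($(\mu u)^{(r-1)\eta/2}\le 2^{r}u^{(r-1)\eta/2}$, enlargement of the Gaussian variance by the fixed factor $\mu\le 2$) and of the extra bounded Beta factors $B\bigl(1+\tfrac{(r-1)\eta}{2},\tfrac{\eta}{2}\bigr)\le \tfrac{2}{\eta}$, all of which are legitimately absorbed by fixing $C$ large enough once and for all so that the recursion closes. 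The analytic ingredients are the same in both proofs (explicit frozen density for $r=0$, the kernel bound \eqref{kernel:bound}, the time-derivative bound on the kernel, the Gaussian control \eqref{iter:step:param}, the Beta recursion and the Mittag--Leffler summation), and your diagnosis of why the naive endpoint split only yields $s^{\eta/2}$ is exactly the obstruction the paper's time-flip is designed to remove; your rescaling removes it in a cleaner, arguably more economical way.
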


\begin{proof}
using the fact that $s\in [0,t]$ and standard computations, one has
$$
|\tilde{p}(0,t+s,x,x') - \tilde{p}(0,t,x,x')| \leq \int_t^{t+s} |\partial_v \tilde{p}(0,v,x,x')| dv \leq C(\lambda,\eta) \frac{s}{t}  g_{c t}(x'-x)
$$

\noindent so \eqref{bound:parametrix:term:diff} is valid for $r=0$. Now proceeding by induction we assume that \eqref{bound:parametrix:term:diff} is valid for $r\geq 0$. By a change of variable, one has
\begin{align*}
\tilde{p} \otimes H^{(r+1)}(0,t+s,x,x') - \tilde{p}\otimes H^{(r+1)}(0,t,x,x') & =  \int_0^{t+s} \int_{\R} \tilde{p} \otimes H^{(r)}(0,u,x,z) H(u,t+s,z,x') dz du \\
& - \int_0^{t} \int_{\R} \tilde{p} \otimes H^{(r)}(0,u,x,z) H(u,t,z,x') dz du \\
& = \int_t^{t+s} \int_{\R} \tilde{p}\otimes H^{(r)}(0,t+s-u,x,z) H(0,u,z,x') dz du \\
& + \int_0^t \int_{\R} \left\{ \tilde{p}\otimes H^{(r)}(0,t+s-u,x,z) - \tilde{p}\otimes H^{(r)}(0,t-u,x,z) \right\} H(0,u,z,x') dz du \\
& = I + J.
\end{align*}

For the first term of the above decomposition, from \eqref{iter:step:param} and \eqref{kernel:bound} combined with standard computations, one easily gets
\begin{align*}
| I | & \leq \frac{C^{r+2}}{t^{1-\frac{\eta}{2}}} \left(\int_{t}^{t+s} (t+s-u)^{\frac{r \eta}{2}} du \right) \left( \prod_{i=1}^{r} B\left(1+ \frac{(i-1) \eta}{2}, \frac{\eta}{2}\right) \right)g_{c(t+s)}(x'-x) \\
& \leq \frac{s}{t} C^{r+2} t^{\frac{(r+1)\eta}{2}} \prod_{i=1}^{r} B\left(1+ \frac{(i-1) \eta}{2}, \frac{\eta}{2}\right)  g_{c t}(x'-x)
\end{align*}

\noindent with $C:=C(T,b,\sigma,\eta)= C(\lambda,\eta)(|b|_{\infty}T^{\frac{1-\eta}{2}}+1)$ and where we used that $s\in [0,t]$ for the last inequality. For the second term, we write $J = J_1 + J_2$ with
\begin{align*}
 J_1 & = \int_{0}^{t/2} \int_{\R} \left\{ \tilde{p}\otimes H^{(r)}(0,t+s-u,x,z) - \tilde{p}\otimes H^{(r)}(0,t-u,x,z)\right\}  H(0,u,z,x') dz du \\
 J_2 &  = \int_{t/2}^{t} \int_{\R} \left\{ \tilde{p}\otimes H^{(r)}(0,t+s-u,x,z) - \tilde{p}\otimes H^{(r)}(0,t-u,x,z)\right\}  H(0,u,z,x') dz du .
\end{align*}

First assume that $s\in [0,t/2]$. Then one has $s\in [0,t-u]$ for all $u \in [0,t/2]$ so that from the induction hypothesis and \eqref{kernel:bound}, one gets
\begin{align*}
|J_1| & \leq C^{r+2} \frac{s}{t^{1-\frac{\eta}{2}}} \left( \int_0^{t/2} (t-u)^{\frac{(r-1)}{2}\eta} u^{-(1-\frac{\eta}{2})} du \right) \left(\prod_{i=1}^{r-1} B\left(1 + \frac{(i-1)\eta}{2},\frac{\eta}{2} \right) \right) g_{c t}(x'-x) \\
& \leq \frac{s}{t} C^{r+2} t^{\frac{(r+1)}{2}\eta} \prod_{i=1}^{r} B\left(1+ \frac{(i-1)\eta}{2},\frac{\eta}{2}\right) g_{c t }(x'-x)
\end{align*}

\noindent with $C:=C(T,b,\sigma,\eta)= C(\lambda,\eta)(|b|_{\infty}T^{\frac{1-\eta}{2}}+1)$. Now, if $s \in (t/2,t]$, one writes $J_1= J^{1}_1 + J^{2}_1$ with
\begin{align*}
J^{1}_1 & = \int_0^{t/2} \int_{\R}  \left\{ \tilde{p}\otimes H^{(r)}(0,t-u+\frac{t}{2} + (s-\frac{t}{2}),x,z) - \tilde{p}\otimes H^{(r)}(0,t-u + \frac{t}{2},x,z)\right\}  H(0,u,z,x') dz du, \\
J^{2}_1 & = \int_0^{t/2} \int_{\R}  \left\{ \tilde{p}\otimes H^{(r)}(0,t-u+\frac{t}{2} ,x,z) - \tilde{p}\otimes H^{(r)}(0,t-u,x,z)\right\}  H(0,u,z,x') dz du. 
\end{align*}

From the induction hypothesis and \eqref{kernel:bound} using the fact that $t/2 \leq t-u$ for $u\in [0,t/2]$, one obtains
\begin{align*}
|J^{1}_1| & \leq C^{r+2}  \left( \int_0^{t/2} \frac{(s-\frac{t}{2})}{\frac{t}{2}+(t-u)} (t-u + \frac{t}{2})^{\frac{r}{2}\eta} u^{-(1-\frac{\eta}{2})} du \right) \left(\prod_{i=1}^{r-1} B\left(1 + \frac{(i-1)\eta}{2},\frac{\eta}{2} \right) \right) g_{c t}(x'-x) \\
& \leq \frac{s}{t} C^{r+2} t^{\frac{(r+1)}{2}\eta} \prod_{i=1}^{r} B\left(1+ \frac{(i-1)\eta}{2},\frac{\eta}{2}\right) g_{c t }(x'-x).
\end{align*}

Using similar arguments with $s\geq t/2$, one also gets
\begin{align*}
|J^{2}_1| & \leq \frac{s}{t} C^{r+2} t^{\frac{(r+1)}{2}\eta} \prod_{i=1}^{r} B\left(1+ \frac{(i-1)\eta}{2},\frac{\eta}{2}\right) g_{c t }(x'-x)
\end{align*}

\noindent which finally yields
$$
|J_1| \leq \frac{s}{t} C^{r+2} t^{\frac{(r+1)}{2}\eta} \prod_{i=1}^{r} B\left(1+ \frac{(i-1)\eta}{2},\frac{\eta}{2}\right) g_{c t }(x'-x).
$$

The last term $J_2$ is given by the sum of three terms, namely
\begin{align*}
J^{1}_2 &= -\int_0^{s} \int_{\R} \tilde{p} \otimes H^{(r)}(0,u,x,z) H(0,t-u+s,z,x') dz du,  \\
J^{2}_2 & = \int_{\frac{t}{2}}^{\frac{t}{2}+s} \int_{\R} \tilde{p}\otimes H^{(r)}(0,u,x,z) H(0,t-u+s,z,x') dz du, \\
J^{3}_2 & = \int_{0}^{\frac{t}{2}} \int_{\R} \tilde{p}\otimes H^{(r)}(0,u,x,z) \left\{ H(0,t-u+s,z,x') - H(0,t-u,z,x') \right\} dz du.
\end{align*}

Using \eqref{iter:step:param} and \eqref{kernel:bound} with similar computations, one easily gets
\begin{align*}
|J^{1}_2| &\leq  C^{r+2} \left( \int_0^s u^{\frac{r \eta}{2}} \frac{1}{(t+s-u)^{1-\frac{\eta}{2}}} du \right) \prod_{i=1}^{r} B\left(1+ \frac{(i-1)\eta}{2},\frac{\eta}{2}\right) g_{c t}(x'-x) \\
& \leq \frac{s}{t} C^{r+2} t^{\frac{(r+1)\eta}{2}} \prod_{i=1}^{r} B\left(1+ \frac{(i-1)\eta}{2},\frac{\eta}{2}\right) g_{c t}(x'-x)
\end{align*}

\noindent and similarly
\begin{align*}
|J^{2}_2| &\leq  C^{r+2}\frac{1}{t^{1-\frac{\eta}{2}}} \left( \int_{\frac{t}{2}}^{\frac{t}{2}+s} u^{\frac{r \eta}{2}}du \right) \prod_{i=1}^{r} B\left(1+ \frac{(i-1)\eta}{2},\frac{\eta}{2}\right) g_{c t}(x'-x) \\
& \leq \frac{s}{t} C^{r+2}  t^{\frac{(r+1)\eta}{2}} \prod_{i=1}^{r} B\left(1+ \frac{(i-1)\eta}{2},\frac{\eta}{2}\right) g_{c t}(x'-x).
\end{align*}

From \A{HR} and \A{HE}, one has $| \partial_v H(0,v, z,x')| \leq C v^{-(2-\frac{\eta}{2})} g_{c v}(x'-z)$, $v\in (0,T]$ and using the fact that $s\in [0,t]$ we derive
$$
 | H(0,t-u+s,z,x') - H(0,t-u,z,x') | \leq C \int_{t-u}^{t-u +s } | \partial_v H(0,v,z,x')| dv \leq C \frac{s}{(t-u)^{2- \frac{\eta}{2}}} g_{c (t+s-u)}(x'-z)
$$

\noindent which in turn yields
\begin{align*}
|J^{3}_2| &\leq \frac{s}{t^{1-\frac{\eta}{2}}}  C^{r+2} \left( \int_{0}^{\frac{t}{2}} u^{\frac{r \eta}{2}} \frac{1}{(t-u)^{1-\frac{\eta}{2}}}du \right) \prod_{i=1}^{r} B\left(1+ \frac{(i-1)\eta}{2},\frac{\eta}{2}\right) g_{c (t+s)}(x'-x) \\
& \leq \frac{s}{t}  C^{r+2} t^{\frac{(r+1)\eta}{2}}\prod_{i=1}^{r} B\left(1+ \frac{(i-1)\eta}{2},\frac{\eta}{2}\right) g_{c t}(x'-x).
\end{align*}

This completes the proof of \eqref{bound:parametrix:term:diff}. Summing \eqref{bound:parametrix:term:diff} from $r=0$ to infinity and using the asymptotics of the Gamma function yield
$$
\forall (t,x,x') \in ]0,T] \times \R \times \R^{*} , \ | p(0,t+s,x,x') - p(0,t,x,x') | \leq C \frac{s}{t} g_{c t}(x'-x)
$$ 

\noindent for some constants $C:=C(T,b,\sigma)= C(\lambda,\eta)(|b|_{\infty}T^{\frac{1-\eta}{2}}+1) E_{\eta/2,1}(C(|b|_{\infty} T^{\frac12}+T^{\frac{\eta}{2}}))$, $C(\lambda,\eta),  c:=c(\lambda,\eta)>1$.
\end{proof}

\providecommand{\bysame}{\leavevmode\hbox to3em{\hrulefill}\thinspace}
\providecommand{\MR}{\relax\ifhmode\unskip\space\fi MR }
\providecommand{\MRhref}[2]{%
  \href{http://www.ams.org/mathscinet-getitem?mr=#1}{#2}
}
\providecommand{\href}[2]{#2}


\bibliographystyle{alpha}
\bibliography{bibli}

\def\cprime{$'$}
\begin{thebibliography}{KHLY15}

\bibitem[Bas97]{bass:97}
R.~F. Bass.
\newblock {\em Diffusions and {E}lliptic {O}perators}.
\newblock Springer, 1997.

\bibitem[BT96a]{ball:tala:96:1}
V.~Bally and D.~Talay.
\newblock The law of the {Euler} scheme for stochastic differential equations:
  I. {C}onvergence rate of the distribution function.
\newblock {\em Prob. Th. Rel. Fields}, 104-1:43--60, 1996.

\bibitem[BT96b]{ball:tala:96:2}
V.~Bally and D.~Talay.
\newblock The law of the {E}uler scheme for stochastic differential equations,
  {II.} {C}onvergence rate of the density.
\newblock {\em Monte-Carlo methods and Appl.}, 2:93--128, 1996.

\bibitem[{\'E}M14]{etore:martinez}
P.~{\'E}tor{\'e} and M.~Martinez.
\newblock Exact simulation for solutions of one-dimensional stochastic
  differential equations with discontinuous drift.
\newblock {\em ESAIM Probab. Stat.}, 18:686--702, 2014.

\bibitem[{\'E}to06]{Etore}
P.~{\'E}tor{\'e}.
\newblock On random walk simulation of one-dimensional diffusion processes with
  discontinuous coefficients.
\newblock {\em Electron. J. Probab.}, 11:no. 9, 249--275 (electronic), 2006.

\bibitem[Fri64]{friedman:64}
A.~Friedman.
\newblock {\em Partial differential equations of parabolic type}.
\newblock Prentice-Hall, 1964.

\bibitem[HS81]{Har:Shepp}
J.~M. Harrison and L.~A. Shepp.
\newblock On skew {B}rownian motion.
\newblock {\em Ann. Probab.}, 9(2):309--313, 1981.

\bibitem[JKMP05]{Jacod:kurtz:meleard:protter}
J.~Jacod, T.~G. Kurtz, S.~M{\'e}l{\'e}ard, and P.~Protter.
\newblock The approximate {E}uler method for {L}\'evy driven stochastic
  differential equations.
\newblock {\em Ann. Inst. H. Poincar\'e Probab. Statist.}, 41(3):523--558,
  2005.

\bibitem[Kes60]{kestelman}
H.~Kestelman.
\newblock {\em Modern theories of integration}.
\newblock 2nd revised ed. Dover Publications, Inc., New York, 1960.

\bibitem[KHLY15]{kohatsu:Lejay:Yasuda}
A.~Kohatsu-Higa, A.~Lejay, and K.~Yasuda.
\newblock {W}eak {A}pproximation {E}rrors for {S}tochastic {D}ifferential
  {E}quations with {N}on-{R}egular {D}rifts.
\newblock {\em {H}al-00840211}, 2015.

\bibitem[KHTZ16]{kohatsu:taguchi:zhong}
A.~Kohatsu-Higa, D.~Taguchi, and J.~Zhong.
\newblock The parametrix method for skew diffusions.
\newblock {\em Potential Analysis}, pages 1--31, 2016.

\bibitem[KM00]{kona:mamm:00}
V.~Konakov and E.~Mammen.
\newblock Local limit theorems for transition densities of {M}arkov chains
  converging to diffusions.
\newblock {\em Prob. Th. Rel. Fields}, 117:551--587, 2000.

\bibitem[KM02]{kona:mamm:02}
V.~Konakov and E.~Mammen.
\newblock Edgeworth type expansions for {E}uler schemes for stochastic
  differential equations.
\newblock {\em Monte Carlo Methods Appl.}, 8--3:271--285, 2002.

\bibitem[KM11]{kona:meno:10}
V.~Konakov and S.~Menozzi.
\newblock Weak error for stable driven stochastic differential equations:
  expansion of the densities.
\newblock {\em J. Theoret. Probab.}, 24(2):454--478, 2011.

\bibitem[KM16]{kona:koz:menozzi}
V.~Konakov and S.~Menozzi.
\newblock Weak error for the {E}uler scheme approximation of diffusions with
  non-smooth coefficients.
\newblock {\em Preprint}, May 2016.

\bibitem[KS87]{kusu:stro:87}
S.~Kusuoka and D.~Stroock.
\newblock Applications of the {M}alliavin calculus. {III}.
\newblock {\em J. Fac. Sci. Univ. Tokyo Sect. IA Math}, 34:391--442, 1987.

\bibitem[Lej06]{Lejay}
A.~Lejay.
\newblock On the constructions of the skew {B}rownian motion.
\newblock {\em Probab. Surv.}, 3:413--466, 2006.

\bibitem[LG84]{Legall}
J.-F. Le~Gall.
\newblock One-dimensional stochastic differential equations involving the local
  times of the unknown process.
\newblock In {\em Stochastic analysis and applications ({S}wansea, 1983)},
  volume 1095 of {\em Lecture Notes in Math.}, pages 51--82. Springer, Berlin,
  1984.

\bibitem[LM06]{lejay:mart}
A.~Lejay and M.~Martinez.
\newblock A scheme for simulating one-dimensional diffusion processes with
  discontinuous coefficients.
\newblock {\em Ann. Appl. Probab.}, 16(1):107--139, 2006.

\bibitem[LM10]{lem:men:2010}
V.~Lemaire and S.~Menozzi.
\newblock On some non-asymptotic bounds for the {E}uler scheme.
\newblock {\em Electron. J. Probab.}, 15:1645--1681, 2010.

\bibitem[Mik12]{miku:2012}
R.~Mikulevicius.
\newblock On the rate of convergence of simple and jump-adapted weak {E}uler
  schemes for {L}\'evy driven {SDE}s.
\newblock {\em Stochastic Process. Appl.}, 122(7):2730--2757, 2012.

\bibitem[MP91]{Miku:Platen}
R.~Mikulevi{\v{c}}ius and E.~Platen.
\newblock Rate of convergence of the {E}uler approximation for diffusion
  processes.
\newblock {\em Math. Nachr.}, 151:233--239, 1991.

\bibitem[MS67]{mcke:sing:67}
H.~P. McKean and I.~M. Singer.
\newblock Curvature and the eigenvalues of the {L}aplacian.
\newblock {\em J. Differential Geometry}, 1:43--69, 1967.

\bibitem[MT12]{Mart:Talay}
M.~Martinez and D.~Talay.
\newblock One-dimensional parabolic diffraction equations: pointwise estimates
  and discretization of related stochastic differential equations with weighted
  local times.
\newblock {\em Electron. J. Probab.}, 17:no. 27, 30, 2012.

\bibitem[MZ15]{miku:zhang:2015}
R.~Mikulevi{\v{c}}ius and C.~Zhang.
\newblock Weak {E}uler approximation for {I}t\^o diffusion and jump processes.
\newblock {\em Stoch. Anal. Appl.}, 33(3):549--571, 2015.

\bibitem[TT90]{tala:tuba:90}
D.~Talay and L.~Tubaro.
\newblock Expansion of the global error for numerical schemes solving
  sto\-chastic differential equations.
\newblock {\em Stoch. Anal. and App.}, 8-4:94--120, 1990.

\end{thebibliography}

\end{document}